\documentclass[12pt]{article}

\usepackage{forest}
\usepackage{amsthm, amssymb}
\usepackage{amsmath}               % great math stuff
\usepackage{amsfonts}              % for blackboard bold, etc
\usepackage{amsthm}                % better theorem environments
\usepackage{mathrsfs}

\usepackage{verbatim} %this package enables us to create multiline comments
\usepackage{xspace}
%\usepackage[pdftex,bookmarks,colorlinks,breaklinks]{hyperref}  % PDF hyperlinks, with coloured links
%\definecolor{dullmagenta}{rgb}{0.4,0,0.4}   % #660066
%\definecolor{darkblue}{rgb}{0,0,0.4}
%\hypersetup{linkcolor=red,citecolor=blue,filecolor=dullmagenta,urlcolor=darkblue} % coloured links
%\hypersetup{linkcolor=black,citecolor=black,filecolor=black,urlcolor=black} % black links, for printed output
%Xfig and pic related stuff
\usepackage{graphicx}  % to include figures
\usepackage[small]{caption} %for fig captions, etc.
\usepackage{color}
\newcommand{\breakingcomma}{%
  \begingroup\lccode`~=`,
  \lowercase{\endgroup\expandafter\def\expandafter~\expandafter{~\penalty0 }}}
\usepackage{float}
\usepackage{wrapfig}

\usepackage{titlesec}
\titleformat{\section}{\large\bfseries}{\thesection}{.5em}{}

%end of xfig and related stuff
%\theorembodyfont{\slshape}
%\newtheorem{theorem}{Theorem}[section]
%\newtheorem{corollary}[theorem]{Corollary}
%\newtheorem{lemma}[theorem]{Lemma}
%\newtheorem{conjecture}[theorem]{Conjecture}
%\newtheorem{claim}{Claim}
%\newtheorem{case}[theorem]{Case}
%\newtheorem{remark}[theorem]{Remark}
%\newtheorem{example}[theorem]{Example}

%%%%%%%%%%%%%%%%%%%%%%%%%%%%%%%%%%%%%%%%%%%%%%%%%%%%%%%%%%%%%%%%%%%%%%%
%% Definitions for Theorem, Lemma, Proposition, Corollary, Proof, and
%%                 Definitions.
%%
%% Please use these:  \begin{theorem}     ... \end{theorem}
%%                    \begin{lemma}       ... \end{lemma}
%%                    \begin{proposition} ... \end{proposition}
%%                    \begin{corollary}   ... \end{corollary}
%%                    \begin{proof}       ... \end{proof}
%%                    \begin{defn}        ... \end{defn}
\newtheorem{theorem}{Theorem}[section]
\newtheorem{lemma}[theorem]{Lemma}
\newtheorem{remark}[theorem]{Remark}
\newtheorem{proposition}[theorem]{Proposition}
\newtheorem{corollary}[theorem]{Corollary}
\newtheorem{definition}[theorem]{Definition}
\newtheorem{example}[theorem]{Example}
\newcount\refno
\refno=0

\newcommand\be{\begin{equation}}
\newcommand\ee{\end{equation}}
\newcommand\bn{\begin{eqnarray}}
\newcommand\en{\end{eqnarray}}
\newcommand\bns{\begin{eqnarray*}}
\newcommand\ens{\end{eqnarray*}}
\newcommand\bd{\begin{definition}}
\newcommand\ed{\end{definition}}
\newcommand\br{\begin{remark}}
\newcommand\er{\end{remark}}
\newcommand\bt{\begin{theorem}}
\newcommand\et{\end{theorem}}
\newcommand\bp{\begin{proposition}}
\newcommand\ep{\end{proposition}}
\newcommand\bc{\begin{corollary}}
\newcommand\ec{\end{corollary}}
\newcommand\bl{\begin{lemma}}
\newcommand\el{\end{lemma}}

\newcommand\bR{{\mathbb R}}
\newcommand\bN{{\mathbb N}}

\newcommand\cA{{\cal A}}
\newcommand\cB{{\cal B}}
\newcommand\cD{{\cal D}}
\newcommand\cE{{\cal E}}
\newcommand\cR{{\cal R}}

\newcommand\cN{{\cal N}}

\newcommand\cS{{\cal S}}

\newcommand{\N}{\mbox{$\mathbb{N}$}}
\newcommand{\NS}{\mbox{\scriptsize${\mathbb{N}}$}}

\newcommand{\F}{\mbox{$\mathcal{F}$}}

\allowdisplaybreaks

\begin{document}

\title{The Double Almost-Riordan Arrays and Their Sequence Characterization, Compression, and Total Positivity}

\author{Tian-Xiao He 
\\
{\small Department of Mathematics}\\
 {\small Illinois Wesleyan University}\\
 {\small Bloomington, IL 61702-2900, USA}\\
}

\date{}

%%%%%%%%%%%%%%%%%%%%%%%%%%%%%%%%%%%%%%%%%%%%%%%%%%%%%%%%%%%%%%%%%%%%%%
\maketitle
\setcounter{page}{1}
\pagestyle{myheadings}
\markboth{T. X. He }
%and J. Zheng}
{Double Almost-Riordan Arrays and their sequence characterizations}

%%%%%%%%%%%%%%%%%%%%%%%%%%%%%%%%%%%%%%%%%%%%%%%%%%%%%%%%%%%%%%%%%%%%%%
\begin{abstract}
\noindent 
In this paper, we define double almost-Riordan arrays and find that the set of all double almost-Riordan arrays forms a group, called the double almost-Riordan group. We also obtain the sequence characteristics of double almost-Riordan arrays and give the production matrices for double almost-Riordan arrays. We define the compression of double almost-Riordan arrays and present their sequence characterization. Finally we give a characteristic for the total positivity of double Riordan arrays, by using which we discuss the total positivity for compressed double almost-Riordan arrays.

\vskip .2in
\noindent
AMS Subject Classification: 05A15, 05A05, 11B39, 11B73, 15B36, 15A06, 05A19, 11B83.

\vskip .2in
\noindent
{\bf Key Words and Phrases:} double almost-Riordan arrays, the double almost-Riordan group, generating function, production matrix or Stieltjes matrix, succession rule, sequence characterization. 

\end{abstract}

\section{Introduction}

Riordan arrays are infinite, lower triangular matrices defined by the generating function of their columns. They form a group, denoted by $\mathcal{R}$ and called {\em the Riordan group} (see Shapiro, Getu, W. J. Woan and L. Woodson \cite{SGWW}). 

A Riordan array $(g(t),f(t))$ is a pair of formal power series $g(t) = \sum_{n\geq 0}g_nt^n$ and $f(t) = \sum_{n\geq 1}f_nt^n$, with $g_0\not= 0$ and $f_1\not= 0$. It defines an infinite lower triangular array $[dn,k]_{n,k\geq 0}$ according to the rule $d_{n,k} = [t^n ]g(t)f(t)^k$. The set of all Riordan arrays forms a group under matrix multiplication

\[
(g(t), f (t))(h(t), l(t)) = (g(t)h(f (t)), l(f (t))).
\]
An almost-Riordan array is defined by an ordered triple $(a|g, f )$ of power series where $a(t) =\sum_{n\geq 0} a_nt^n$, $g(t) = \sum_{n\geq 0} g_nt^n$ and $f(t) = \sum_{n\geq1} f_nt^n$, with $a_0, g_0, f_1\not= 0$. The array is identified with the lower-triangular matrix defined as follows: its first column is given by the expansion of $a(t)$. The remaining element of the infinite tridiagonal matrix coincide with the Riordan array $(g(t), f (t))$.

An infinite lower triangular matrix $D = [d_{n,k}]_{n,k\geq 0}$ is called double Riordan array, if $g(t)$ gives column zero and $f_1(t)$ and $f_2(t)$ are multiplier functions, where $g(t) =\sum_{n\geq 0} g_{2n}t^{2n}$, $f_1(t) =\sum_{n\geq 0} f_{1,2n+1}t^{2n+1}$ and $f_2(t) =\sum_{n\geq 0} f_{2,2n+1}t^{2n+1}$ together with $g_0,f_{1,1}, f_{2,1}\not= 0$. Then the double Riordan array related to $g(t), f_1(t)$ and $f_2(t)$, denoted by $(g; f_1, f_2)$, has the column vector $(g; gf_1, gf_1f_2, gf_1^2f_2, gf_1^2f_2^2, \ldots)$.

Riordan arrays, almost-Riordan arrays and double Riordan arrays have emerged as a powerful tool with broad applications in various branches of mathematics. With their intricate connections to combinatorics, group theory, matrix theory, and number theory, these arrays serve as a bridge between these disciplines. This paper presents the study of the double almost-Riordan arrays and the double almost-Riordan group defined in \cite{He24}. Specifically, it studies the sequence characterization, compression, and total positivity of double almost-Riordan arrays.

More formally, let us consider the set of all formal power series (f.p.s.) in $t$, $\F = {\mathbb K}[\![$$t$$]\!]$, with a field ${\mathbb K}$ of characteristic $0$ (e.g., ${\mathbb Q}$, ${\mathbb R}$, ${\mathbb C}$, etc.). The \emph{order} of $f(t)  \in \F$, $f(t) =\sum_{k=0}^\infty f_kt^k$ ($f_k\in {\bR}$), is the minimal number $r\in\N$ such that $f_r \neq 0$. Denote by $\F_r$ the set of formal power series of order $r$. Let $g(t) \in \F_0$ and $f(t) \in \F_1$. Then, the pair $(g(t) ,\,f(t) )$ defines the {\em (proper) Riordan array} $D=(d_{n,k})_{n,k\in \NS}=(g(t), f(t))$ having
  
\begin{equation}\label{Radef}
d_{n,k} = [t^n]g(t) f(t) ^k
\end{equation}
or, in other words, having $g(t) f(t)^k$ as the generating function whose coefficients make-up the entries of column $k$. 

From the {\it fundamental theorem of Riordan arrays} (see \cite{Sha1}), it is immediate to show that the usual row-by-column product of two Riordan arrays is also a Riordan array:
\begin{equation}\label{Proddef}
    (g_1(t) ,\,f_1(t) )  (g_2(t) ,\,f_2(t) ) = (g_1(t) g_2(f_1(t) ),\,f_2(f_1(t) )).
\end{equation}
The Riordan array $I = (1,\,t)$ acts as an identity for this product. Thus, the set of all Riordan arrays forms the Riordan group $\mathcal{R}$.

Several subgroups of $\mathcal{R}$ are important and have been considered in the literature:
\begin{itemize} \item the set $\mathcal{A}$ of {\em Appell arrays} is the collection of all Riordan arrays $R = (g(t) ,\,t )$ in ${\cR}$; 
\item the set $\mathcal{L}$ of {\em Lagrange arrays} is the collection of all Riordan arrays $R = (1 ,\,f(t) )$ in ${\cR}$;
\item the set $\mathcal{B}$ of \emph{Bell} or {\em renewal arrays} is the collection of all Riordan arrays $R = (g(t) ,\,t g(t))$ in ${\cR}$;
\item the set $\mathcal{H}$ of \emph{hitting-time arrays} is the collection of all Riordan arrays $R = (tf'(t)/f(t) ,\,f(t))$ in ${\cR}$;
\item the set $\mathcal{D}$ of the Riordan arrays $R = (f'(t) ,\, f(t) )$ in ${\cR}$ is called the {\it derivative subgroup}.
\item the set $\mathcal{E}$ of the Riordan arrays $R=((f(t)/t)^{r}f'(t)^{s}, f(t))$ in ${\cR}$ with real or complex $r$ and $s$ is called {\it Lu\'zon-Merlini-Mor\'on-Sprugnoli (LMMS) subgroup}, denoted by ${\cE}[r,s]$, which includes ${\cD}={\cE}[0,1]$ as its special case (see \cite{LMMS2}). 
\end{itemize}

From \cite{Rog}, an infinite lower triangular array $[d_{n,k}]_{n,k\in{\bN}}=(g(t), f(t))$ is a Riordan array if and only if an {\it $A$-sequence} $A=(a_0\not= 0, a_1, a_2,\ldots)$ exists such that for every $n,k\in{\bN}$ there holds 
\be\label{eq:1.1}
d_{n+1,k+1} =a_0 d_{n,k}+a_1d_{n,k+1}+\cdots +a_nd_{n,n},
\ee 
which is shown in \cite{HS} to be equivalent to 
\be\label{eq:1.2}
f(t)=tA(f(t)).
\ee
Here, $A(t)$ is the generating function of the $A$-sequence. In \cite{MRSV} it is also shown that a unique {\it $Z$-sequence} $Z=(z_0, z_1,z_2,\ldots)$ exists such that every element in column $0$ can be expressed as the linear combination 
\be\label{eq:1.3}
d_{n+1,0}=z_0 d_{n,0}+z_1d_{n,1}+\cdots +z_n d_{n,n},
\ee
or equivalently (see \cite{HS}),
\be\label{eq:1.4}
g(t)=\frac{d_{0,0}}{1-tZ(f(t))}.
\ee

Denote the {\it upper shift matrix} by $U$, i.e., 
\[
U=(\delta_{i+1,j})_{i,j\geq 0}=\left [ \begin{array}{llllll} 0& 1& 0& 0& 0& \cdots\\
0&0 &1& 0& 0&\cdots\\
0 &0& 0&1& 0& \cdots\\
0&0& 0& 0&1&\cdots\\
\vdots &\vdots& \vdots& \vdots&\vdots&\ddots\end{array}\right]
\]
and 
\be\label{eq:1.4-2}
P=\left [ \begin{array}{llllll} z_0& a_0& 0& 0& 0& \cdots\\
z_1& a_1 &a_0& 0& 0&\cdots\\
z_2 &a_2& a_1& a_0& 0& \cdots\\
z_3& a_3& a_2& a_1&a_0&\cdots\\
\vdots &\vdots& \vdots& \vdots&\vdots&\ddots\end{array}\right]=\left( Z(t), A(t), tA(t), t^{2}A(t),\ldots\right), 
\ee
where the rightmost expression is the representation of $P$ by using its column generating functions. Here, $P$ is called the {\it production matrix} or {\it $P$-matrix characterization} or simply {\it $P$ matrix} (see Deutsch, Ferrari, and Rinaldi 
\cite{DFR05, DFR}). From \cite{DFR05, DFR} or Proposition 2.7 of \cite{He15}, the $P$-matrix of Riordan array $R$ satisfies 

\be\label{1.5}
P =R^{-1}UR=R^{-1}\overline{R},
\ee
where $\overline{R}$ is the truncated Riordan array $R$ with the first row omitted.

The ECO method is a constructive method to produce all the objects of a given class, according to the growth of a certain parameter (in terms of the size) of the objects. A complete description of the ECO method and its applications for the enumeration of several classes of combinatorial objects is given in \cite{BDLPP}. The roots of the ECO method can be traced back to the paper \cite{CGHK}. If an ECO construction is sufficiently regular, then it is often possible to describe it using a succession rule, whose definition is first introduced by Julian West in \cite{West}. Some algebraic properties of succession rules have been determined in \cite{FPPR}. 

A {\it succession rule} is a formal system consisting of an axiom $(a)$, $a\in {\bN}_+$, and a set of productions 
\[
\left\{ (k_j)\to (e_1(k_j))(e_2(k_j))\cdots (e_j(k_j)): j\in{\bN}\right\},
\]
where $e_i:{\bN}_+\to {\bN}_+$, for deriving the successors $(e_1(k))(e_2(k))\cdots (e_k(k))$ of any given label $(k)$, $k\in{\bN}$. Thus, we may write a succession rule, denoted by $\Omega$, as 

\be\label{eq:1.5}
\Omega:\left\{ \begin{array}{ll} (a)\\ (k)\to  (e_1(k))(e_2(k))\cdots (e_k(k)).\end{array}\right.
\ee
$(a), (k), (k),$ and $(e_i(k))$ ($a,k,e_i(k)\in{\bN}_+$) are called the labels of $\Omega$. The succession rule can be presented as a {\it generating rooted tree} called a {\it generating tree}, whose vertices are the labels of $\Omega$, where $(a)$ is the label of the root, and each node labeled $(k)$ has $k$ sons labeled by $(e_1(k)), \dots,$ $(e_k(k))$, respectively. Denote by $f_n$ the number of nodes at level $n$ in the generating tree determine by $\Omega$. We call $\{ f_n\}_{n\geq 0}$ the sequence $\{ f_n\}_{n\geq 0}$ associated with $\Omega$. In \cite{DFR05} the authors represent the succession rule $\Omega$ by an infinite matrix $P=(p_{i,j})_{i,j\geq 0}$, called the {\it production matrix} of the succession rule $\Omega$, where $p_{i,j}$ is the number of labels $\ell_j$ produced by label $\ell_i$, and $\{(\ell_k)\}_{k\geq 0}$ is the label set of $\Omega$ with the axiom label $\ell_0$. Simply speaking, if $\Omega$ containing root $(a)$ satisfies succession rule $(a)\to (a)\dots (a) (a+1)\dots (a+1)\dots (a+m)\dots (a+m)$, which we shorten to $(a)^{n_1}(a+1)^{n_2}\dots (a+m)^{n_m}$, then the first row of $P$ is $(n_1, n_2, \cdots, n_m, 0,\cdots)$, where $n_j$ ($j=1,2,\ldots, m$) may be zero, say $n_k=0$, if $(a+k)$ is not a label. Similarly, we may write other rows of $P$ row by row. 

In \cite{DFR} the following result is proved, which can be viewed as a particular case of \eqref{1.5} when $A$- and $Z$- sequences are non-negative sequences. The case of not non-negative $A$- and/or $Z$-sequences is discussed in \cite{He15}. 

\begin{proposition}\label{pro:1.2} \cite{DFR}
Let $P$ be an infinite production matrix and let $A_P$ be the ECO matrix induced by $P$ defined before. Then $A_P$ is a Riordan matrix if and only if $P$ is of the form 

\be\label{eq:1.8}
P=\left [ \begin{array}{llllll} z_0& a_0& 0& 0& 0& \cdots\\
z_1& a_1 &a_0& 0& 0&\cdots\\
z_2 &a_2& a_1& a_0& 0& \cdots\\
z_3& a_3& a_2& a_1&a_0&\cdots\\
\vdots &\vdots& \vdots& \vdots&\vdots&\ddots\end{array}\right],
\ee
where $z_j, a_j\geq 0$ for $j=0,1,\ldots$.
\end{proposition}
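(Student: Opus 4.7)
The plan is to read the conclusion off the production-matrix identity \eqref{1.5}, namely $\overline{R}=RP$, combined with the $A$- and $Z$-sequence characterizations \eqref{eq:1.1}--\eqref{eq:1.4}. Writing $R=[d_{n,k}]$ and expanding entrywise gives $d_{n+1,k}=\sum_{j\geq 0} d_{n,j}\,p_{j,k}$, so the columns of $P$ are exactly the coefficients in the recurrences that express row $n+1$ of $R$ in terms of row $n$.

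For the ``only if'' direction, assume $A_P=R$ is Riordan. Rogers' theorem \cite{Rog} supplies an $A$-sequence with $a_0\neq 0$ satisfying \eqref{eq:1.1}, and \cite{MRSV} supplies a $Z$-sequence satisfying \eqref{eq:1.3}. Comparing \eqref{eq:1.1} with the entrywise identity above for $k\geq 1$ forces $p_{j,k+1}=a_{j-k}$ for $j\geq k$ and $0$ otherwise, while comparing \eqref{eq:1.3} at $k=0$ forces $p_{j,0}=z_j$. This is precisely the shape \eqref{eq:1.8}, and the non-negativity $z_j,a_j\geq 0$ is automatic from the ECO interpretation, where each $p_{i,j}$ counts the labels $\ell_j$ produced by $\ell_i$.

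For the converse, suppose $P$ has the form \eqref{eq:1.8}. Set $A(t)=\sum_j a_jt^j$ and $Z(t)=\sum_j z_jt^j$, let $f(t)\in\mathcal{F}_1$ be the unique solution of $f(t)=tA(f(t))$, and put $g(t)=d_{0,0}/(1-tZ(f(t)))$. Then $(g,f)$ is a Riordan array by \eqref{eq:1.2} and \eqref{eq:1.4}, and its $A$- and $Z$-sequences are exactly those encoded in $P$, so its entries satisfy both \eqref{eq:1.1} and \eqref{eq:1.3}. Running the entrywise identity in reverse yields $\overline{(g,f)}=(g,f)\cdot P$; since $A_P$ is defined by the same recurrence $\overline{A_P}=A_P\cdot P$ with identical initial row $(d_{0,0},0,0,\ldots)$, uniqueness of the solution gives $A_P=(g,f)$. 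The main obstacle is an implicit non-degeneracy requirement: for the ``if'' direction one really needs $a_0>0$ rather than merely $\geq 0$, since otherwise the superdiagonal of $A_P$ vanishes, no $f\in\mathcal{F}_1$ exists, and $A_P$ cannot be a proper Riordan array; handling this edge case is the only delicate point beyond the entrywise comparison.
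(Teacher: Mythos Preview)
The paper does not give its own proof of this proposition; it is quoted from \cite{DFR}, with only the remark that the result ``can be viewed as a particular case of \eqref{1.5} when $A$- and $Z$-sequences are non-negative sequences.'' Your argument is exactly that particular case spelled out: you unpack $\overline{R}=RP$ entrywise and identify the columns of $P$ with the $Z$- and $A$-sequences via \eqref{eq:1.1}--\eqref{eq:1.4}, which is precisely what the paper's one-line remark is pointing to. Your observation that the ``if'' direction tacitly requires $a_0>0$ (so that $f\in\F_1$ exists and $A_P$ is a \emph{proper} Riordan array) is a genuine and correct caveat that the paper does not flag.
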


As an example, let us consider the Fibonacci tree $F_1$ shown in Horibe \cite{Hor}, which satisfies the following succession rule:

\be\label{eq:1.5-2}
\Omega: \left\{ \begin{array}{ll} (a)\\ a(k)\to a(k) a(k+1).\end{array}\right.
\ee
Figure $1$ presents first few levels of $F_1$.
\begin{figure}[H]
\centering
\includegraphics{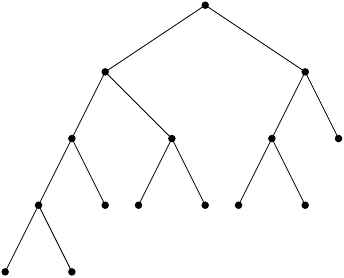}
\caption{Figure $1$. Fibonacci tree $F_1$}
\end{figure}
The $P$ matrix of the succession rule is 

\be\label{eq:1.8-2}
P=\left [ \begin{array}{llllll} 1& 1& 0& 0& 0& \cdots\\
1& 1 &1& 0& 0&\cdots\\
0&0& 1& 1& 0& \cdots\\
0& 0& 0& 1&1&\cdots\\
\vdots &\vdots& \vdots& \vdots&\vdots&\ddots\end{array}\right].
\ee

From \eqref{1.5}, one may finds the corresponding Riordan array $R$ is the pascal matrix $(1/(1-t), t/(1-t))$: 

\be\label{eq:1.8-3}
R=\left [ \begin{array}{llllll} 1& 0& 0& 0& 0& \cdots\\
1& 1 &0& 0& 0&\cdots\\
1&2& 1& 0& 0& \cdots\\
1& 3& 3& 1&0&\cdots\\
\vdots &\vdots& \vdots& \vdots&\vdots&\ddots\end{array}\right].
\ee
The sum of the numbers along the parallel lines in the direction of $(1,1)$ are the Fibonacci numbers, $\{1,1,2,3,5,8,\ldots\}$.

Succession rule can generates more wide class of lower triangle matrices that do not belong to Riordan array class.  for instance, in Stanley \cite{Sta}, the Fibonacci tree $F_2$ is defined to be the rooted tree with root $v$, such that the root has degree one, the child of every vertex of degree one has degree two, and the two children of every vertex of degree two have degrees one and two. 
The succession rule $\Omega$ for the Fibonacci tree $F_2$ is 

\be\label{eq:1.5-3}
\Omega:\left\{ \begin{array}{ll} (a_{2l})\to (a_{2l})(a_{2l+1})\\ (a_{2l+1})\to  (a_{2l+2}),\end{array}\right.
\ee
for $\ell =0,1,2,\ldots$.  This succession rule generates the following lower triangle matrix $\bar R$:

%\begin{center}
\be\label{1.5-4}
\hat R=
\begin{bmatrix}
    1&0&0&0&0&0&...\\
    1&1&0&0&0&0&...\\
    1&1&1&0&0&0&...\\
    1&1&2&1&0&0&...\\
    1&1&3&2&1&0&...\\
    1&1&4&3&3&1&...\\
%    1&1&5&4&6&3&...\\
%    1&1&6&5&10&6&...\\
    \vdots &\vdots&\vdots&\vdots&\vdots&\vdots&\ddots\\
\end{bmatrix} 
\ee
Matrix $\hat R$ is called {\it Fibonacci-Stanley array}. The row sums of $\hat R$ are Fibonacci numbers. Obviously, $\hat R$ is not a Riordan array. In \cite{He18}, we show that $\hat R$ is the compression of a  double Riordan array. For the convenience of readers, we quote this detailed content below. This is also a motivation for studying double Riordan arrays.

In construction of a Riordan array, one multiplier function $h$ is used to multiply one column to obtain the next column. Suppose alternating rules are applied to generate an infinite matrix similar to a Riordan array. To consider this case, one may use two multiplier functions, denoted by $f_1$ and $f_2$, respectively. Davenport, Shapiro, and Woodson \cite{DSW} require that $g$ to be an even function and $f_1$ and $f_2$ to be odd functions so that they can develop an analog of the fundamental theorem and thus obtain a group structure. Let $g(t)$ = $\sum _{k=0}^{\infty} g_{2k}t^{2k}$, $f_{1}(t)$ = $\sum_{k=0}^{\infty} f_{1,2k+1} t^{2k+1}$, and $f_{2} (t)$ = $\sum_{k=0}^{\infty} f_{2,2k+1} t^{2k+1}$. Then the double Riordan matrix in terms of  $g(t)$, $f_1(t)$, and $f_2(t)$, denoted by $(g; f_{1}, f_{2})$, is defined by the generating function of its columns as 

\begin{center}
($g$, $gf_{1}$, $gf_{1}f_{2}$, $gf_{1}^{2}f_{2}$, $gf_{1}^{2}f_{2}^{2}$,...).
\end{center}
There two cases of the {\it first fundamental theorem of double Riordan arrays}:

\be\label{1.6}
(g; f_{1}, f_{2})A(t)=B(t),
\ee
where for $A(t)=\sum_{k\geq 0}a_{2k}t^{2k}$ and $A(t)=\sum_{k\geq 0}a_{2k+1}t^{2k+1}$, we have $B(t)=dA(\sqrt{h_{1}h_{2}})$ and $B(t)=d\sqrt{h_{1}/h_{2}}A(\sqrt{h_{1}h_{2}})$, respectively. Based on the fundamental theorem of double Riordan arrays, we may define a multiplication of two double Riordan arrays as 

\be\label{1.7}
(g; f_{1},f_{2})(d; h_{1},h_{2})=(g d(\sqrt{f_{1}f_{2}}); \sqrt{f_{1}/f_{2}}h_{1}(\sqrt{f_{1}f_{2}}), \sqrt{f_{2}/f_{1}}h_{2}(\sqrt{f_{1}f_{2}})),
\ee
where $d(t)$ = $\sum _{k=0}^{\infty} d_{2k}t^{2k}$, $h_{1}(t)$ = $\sum_{k=0}^{\infty} h_{1,2k+1} t^{2k+1}$, and $h_{2} (t)$ = $\sum_{k=0}^{\infty} h_{2,2k+1}$ $t^{2k+1}$. The collection of all double Riordan arrays associated with the multiplication defined above form a group called double Riordan group and denoted by $\cal{D}\cal{R}$, where the identity is $(1,t,t)$.  From \cite{DSW}, ${\cS}:=\{ (g;f_1, f_2)\in {\cD}{\cR}:g=1\}$, ${\cB}_1:=\{(g;f_1, f_2)\in {\cD}{\cR}:f_1=tg\}$, ${\cB}_2:=\{(g; f_1, f_2)\in {\cD}{\cR}:f_2=tg\}$, and ${\cA}:=\{(g;t,t)\in{\cD}{\cR}\}$ are associate, type-1 Bell, type-2 Bell, and Appel subgroups of ${\cD}{\cR}$, respectively. 

In \cite{DSW}, subgroups ${\cA}:=\{(1; f_1,f_2)\in{\cD}{\cR}\}$, ${\cB}_1:=\{(g; tg,f_2)\in{\cD}{\cR}\}$, ${\cB}_2:=\{(g; f_1,tg)\in{\cD}{\cR}\}$, and ${\cN}:=\{ (g;t,t)\in{\cD}{\cR}\}$ are given. Here, ${\cN}$ is a normal subgroup of ${\cD}{\cR}$ and ${\cD}{\cR}$ is the semidirect product of ${\cN}$ and ${\cA}$. 

If $(g;f_1,f_2)$ is a double Riordan array, where $g(t)$ = $\sum _{k=0}^{\infty} g_{2k}t^{2k}$, $f_{1}(t)$ = $\sum_{k=0}^{\infty} f_{1,2k+1} t^{2k+1}$, and $f_{2} (t)$ = $\sum_{k=0}^{\infty} f_{2,2k+1} t^{2k+1}$. Then $(\hat g; \hat f_1, \hat f_2)$, where $\hat g(t)$ = $\sum _{k=0}^{\infty} g_{2k}t^{k}$, $\hat f_{1}(t)$ = $\sum_{k=0}^{\infty} f_{1,2k+1} t^{k+1}$, and $\hat f_{2} (t)$ = $\sum_{k=0}^{\infty} f_{2,2k+1} t^{k+1}$, is the compression of the double Riordan array $(g;f_1,f_2)$. It can be seen from \cite{He18} that $\hat R$ shown above is the compression of the Double Riordan array $(1/(1-t^2), t, t/(1-t^2))$. 

In \cite{He18}, the following sequence characterization of a double Riordan array in ${\cD}{\cR}$ is given. 

\begin{theorem}\label{thm:0.2}\cite{He18}
$(g;f_1,f_2)=(d_{n,k})_{n,k\geq 0}$ is in ${\cD}{\cR}$ with $d_{n,n}\not= 0$ if and only if there exist $A_1$-sequence, $\{ a_{1,0}, a_{1,1}, a_{1,2},\ldots\}$, $A_2$- sequence, $\{ a_{2,0}\not=0, a_{2,1}, a_{2,2},\ldots\}$, and $Z$-sequence, $\{z_0,z_1,z_2,\ldots\}$ such that 

\bn\label{0.1}
&&d_{n,2k-1}=a_{1,0}d_{n-1,2k-2}+a_{1,1}d_{n-1,2k}+a_{1,2}d_{n-1,2k+2}+\cdots,\quad k\geq 1\nonumber\\
&&d_{n,2k}=a_{2,0}d_{n-2,2k-2}+a_{2,1}d_{n-2,2k}+a_{2,2}d_{n-2,2k+2}+\cdots,\quad k\geq 1\nonumber\\
&&d_{n,0}=z_{0}d_{n-2,0}+z_{1}d_{n-2,2}+z_{2}d_{n-2,4}+\cdots 
\en
for $n\geq 0$. Furthermore, denoting the generating functions of $A_1$-sequence, $A_2$-sequence, and $Z$-sequence by 
$A_1(t)=\sum_{k\geq 0}a_{1,k}t^{2k}$, $A_2(t)=\sum_{k\geq 0} a_{2,k}t^{2k}$, and $Z(t)=\sum_{k\geq 0} z_kt^{2k}$, respectively, we have an equivalent form of \eqref{0.1}

\bn\label{0.2}
&&f_1=tA_1(\sqrt{f_1f_2}),\\
&&f_1f_2=t^2A_2(\sqrt{f_1f_2}),\label{0.3}\\
&&g(t)=\frac{g(0)}{1-t^2Z(\sqrt{f_1f_2})}.\label{0.4}
\en
 \end{theorem}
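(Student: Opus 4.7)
The plan is to prove both implications of the \emph{iff} by translating the scalar recurrences in \eqref{0.1} into generating-function identities among the columns. Write $c_j(t)=\sum_{n\geq 0}d_{n,j}t^n$. By the definition of $(g;f_1,f_2)$, the columns are $c_{2k}=g(f_1f_2)^k$ and $c_{2k+1}=gf_1(f_1f_2)^k$; multiplication by $f_1$ (order one) shifts one row and one column, while multiplication by $f_1f_2$ (order two) shifts two rows and two columns. This shift correspondence is the pivot connecting \eqref{0.1} to \eqref{0.2}--\eqref{0.4}.

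For the forward direction, assume $(d_{n,k})=(g;f_1,f_2)$. Because $f_1\in\mathcal{F}_1$ and $f_1f_2\in\mathcal{F}_2$ have nonzero leading coefficients, the equations \eqref{0.2}, \eqref{0.3}, \eqref{0.4} can be solved uniquely for formal power series $A_1, A_2, Z$ in $t^2$: the first by inverting the relation $f_1/t=A_1(\sqrt{f_1f_2})$, the second analogously, and the third by direct algebraic rearrangement. Multiplying \eqref{0.2} by $g(f_1f_2)^{k-1}$ yields the identity $gf_1(f_1f_2)^{k-1}=t\,g(f_1f_2)^{k-1}\,A_1(\sqrt{f_1f_2})$; extracting $[t^n]$ on both sides is precisely the first line of \eqref{0.1}. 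The second and third lines follow in the same way from \eqref{0.3} and \eqref{0.4}.

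For the backward direction, suppose the three sequences exist and $d_{n,n}\neq 0$. Set $g=c_0$, $f_1=c_1/c_0$, and $f_2=c_2/c_1$; the nonzero diagonal guarantees these are legitimate formal power series. The key parity observation is that the recurrences, combined with $d_{n,n}\neq 0$ and the shifts $n\mapsto n-2$ for even columns and $n\mapsto n-1$ for odd columns, force inductively $d_{n,2k}=0$ for $n$ odd and $d_{n,2k+1}=0$ for $n$ even. Hence $g$ is even and $f_1, f_2$ are odd of order one. The recurrences translate to the column identities $c_{2k-1}=t\sum_{j\geq 0}a_{1,j}\,c_{2k-2+2j}$ and $c_{2k}=t^2\sum_{j\geq 0}a_{2,j}\,c_{2k-2+2j}$ for $k\geq 1$, together with $c_0-d_{0,0}=t^2\sum_{j\geq 0}z_j\,c_{2j}$. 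A straightforward induction on $k$ then produces $c_{2k}=g(f_1f_2)^k$ and $c_{2k+1}=gf_1(f_1f_2)^k$, which is the double Riordan structure. The identities \eqref{0.2}--\eqref{0.4} drop out of the $k=1$ cases after dividing by the common factor $g$.

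The main subtlety I expect is the parity bookkeeping: the sums in \eqref{0.1} skip alternate columns by design, which is precisely what makes $A_1, A_2, Z$ live in the subring of series in $t^2$ and what makes $A_i(\sqrt{f_1f_2})$ well-defined as a formal power series in $t$. Once the alignment between the row/column parity pattern and the evenness of the $A_i$ and $Z$ generating functions is secured, the rest of the argument is routine coefficient extraction, modeled on the single-Riordan equivalence between \eqref{eq:1.1} and \eqref{eq:1.2}, and between \eqref{eq:1.3} and \eqref{eq:1.4}.
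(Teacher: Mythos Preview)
First, note that the paper does not itself prove this theorem; it is quoted from \cite{He18} as background.  Your forward direction is correct and is in the same spirit as the arguments the paper gives later (for instance in the proof of Theorem~\ref{thm:new-3.1}): once \eqref{0.2}--\eqref{0.4} are in hand, multiplying by the appropriate column generating function and extracting $[t^{n}]$ recovers \eqref{0.1}.

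The backward direction, however, has a real gap.  Your column identities
\[
c_{2k}=t^{2}\sum_{j\ge 0}a_{2,j}\,c_{2k-2+2j},\qquad
c_{2k-1}=t\sum_{j\ge 0}a_{1,j}\,c_{2k-2+2j}
\]
each involve $c_{2k},c_{2k+2},\ldots$ on the right-hand side, not only columns of index below $2k$.  So an ``induction on $k$'' cannot get started: to identify $c_{2k}$ you would already need $c_{2k},c_{2k+2},\ldots$ in closed form.  Likewise, your claim that ``\eqref{0.2}--\eqref{0.4} drop out of the $k=1$ cases'' presupposes $c_{2j}=g(f_{1}f_{2})^{j}$, which is precisely what remains to be proved.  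The standard repair reverses the order of the argument: from the given $A_{1},A_{2},Z$ and $d_{0,0}$, first \emph{solve} \eqref{0.2}--\eqref{0.4} for $g',f_{1}',f_{2}'$ (since $A_{2}$ is even, \eqref{0.3} reads $w=t^{2}\tilde A_{2}(w)$ with $w=f_{1}'f_{2}'$ and $\tilde A_{2}(s)=\sum_{k}a_{2,k}s^{k}$, which has a unique order-two solution), form the double Riordan array $(e_{n,k})=(g';f_{1}',f_{2}')$, and observe via your forward argument that $(e_{n,k})$ obeys the same recurrences \eqref{0.1}.  Then prove $(d_{n,k})=(e_{n,k})$ by induction on the \emph{row} index $n$.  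That induction closes because the even-column entries of row $n$ are determined by the even columns of row $n-2$, and the odd-column entries of row $n$ by the even columns of row $n-1$; the base rows $0$ and $1$ agree by your parity observation together with the choice $g'(0)=d_{0,0}$.
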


Some recently published results on double Riordan arrays can be found in Sun and Sun \cite{SS}, Zhang and Zhao \cite{ZZ}, etc. and their references.

We now present the definition of almost-Riordan arrays. 

\begin{definition}\label{def:1.3}\cite{Barry16}
Let $d, g\in \F_0$ with $d(0), g(0)=1$ and $f\in \F_1$ with $f'(0)=1$. 
We call the following matrix an almost-Riordan array with respect to $d,g,$ and $f$ and denote it by $(d|\,g,f)$:

\be\label{1.13}
(d|\,g,f)=(d, tg, tgf, tgf^2,\cdots),
\ee
where $d$, $tg$, $tgf$, $t^2gf\cdots$, are the generating functions of the $0$th, $1$st, $2$nd, $3$rd, $\cdots$, columns of the matrix $(a|\,g,f)$, respectively. 
\end{definition}

It is clear that $(d|\,g,f)$ can be written as 
\be\label{1.14}
(d|\,g,f)=\left( \begin{matrix} d(0) & 0\\ (d-d(0))/t & (g,f) \end{matrix}\right),
\ee
where $(g,f)=(g,gf, gf^2,\ldots)$, a Riordan array. Particularly, if $d=g$ and $f=t$, then the almost-Riordan array $(d|\,g,f)$ reduces to the Appell-type Riordan array $(g, t)$. 

Barry, Pantelidis, and the author \cite{BHP} present the following operation form for the almost-Riordan group.

\begin{theorem}\label{thm:0.3}\cite{BHP}
The set of all almost-Riordan arrays defined by \eqref{1.13} forms a group, denoted by $a{\cR}$, with respect to the multiplication defined by 
\be\label{1.15}
(a|\,g,f)(b|\,d,h)=\left( \left. a+\frac{tg}{f}(b(f)-1)\right |\,gd(f), h(f)\right),
\ee
where $a, g, b, d\in \F_0$ and $f, h\in \F_1$, and $(a|\,g,f)$ and $(b|\,d,h)$ are the almost-Riordan arrays defined by \eqref{1.13} or \eqref{1.14}.
\end{theorem}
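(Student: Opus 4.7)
The plan is to reduce the group axioms to ordinary matrix multiplication by first verifying that formula \eqref{1.15} coincides with the row-by-column product of the two infinite lower triangular matrices $(a|g,f)$ and $(b|d,h)$. Once this identification is made, associativity comes for free, and only closure, the existence of an identity, and the existence of inverses need to be checked by direct computation with generating functions.

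To derive \eqref{1.15}, I would view a lower triangular matrix $M$ as a linear operator on $\F$, identifying a column vector with its generating function. From $(a|g,f)=(a,\,tg,\,tgf,\,tgf^2,\ldots)$ a short computation shows that for any power series $p(t)$ with $p(0)=p_0$,
\[
(a|g,f)\cdot p \;=\; p_0\, a \;+\; \frac{tg}{f}\bigl(p(f)-p_0\bigr),
\]
the factor $tg/f$ being a genuine element of $\F_0$ thanks to $f'(0)=1$. Applying this operator to column $0$ of $(b|d,h)$, whose generating function is $b$ with $b(0)=1$, reproduces the first slot of \eqref{1.15}. Applying it to column $k\geq 1$, whose generating function is $tdh^{k-1}$ and whose constant term vanishes, produces $tg\,d(f)\,h(f)^{k-1}$, matching column $k$ of the right-hand side. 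Closure is then immediate: $g\,d(f)\in\F_0$ with constant term $1$ since $g(0)=d(0)=1$, $h(f)\in\F_1$ with $(h(f))'(0)=1$, and $a+\tfrac{tg}{f}(b(f)-1)\in\F_0$ with constant term $1$ because $b(0)=1$ forces $b(f)-1\in\F_1$.

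For the identity $(e|E,F)$ the formula forces $e(f)=E(f)=1$ and $F(f)=f$, so the unique candidate is $(1|1,t)$, and both $(a|g,f)(1|1,t)=(a|g,f)$ and $(1|1,t)(a|g,f)=(a|g,f)$ follow from a one-line substitution. For the inverse of $(a|g,f)$, I would solve the three equations $h(f)=t$, $g\,d(f)=1$, and $a+\tfrac{tg}{f}(b(f)-1)=1$ in that order. The first equation yields $h=\bar f$, the compositional inverse of $f$, which lies in $\F_1$ with $\bar f'(0)=1$. The second gives $d=1/g(\bar f)\in\F_0$ with $d(0)=1$. The third, after substituting $t\mapsto \bar f(t)$, produces
\[
b(t) \;=\; 1 \;+\; \frac{t\bigl(1-a(\bar f(t))\bigr)}{\bar f(t)\,g(\bar f(t))}.
\]

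The step requiring genuine care, and which I expect to be the main technical obstacle, is checking that this formula actually defines an element of $\F_0$ with $b(0)=1$. One must verify that the order of the numerator $t(1-a(\bar f))$ is at least the order of the denominator $\bar f\cdot g(\bar f)$. Since $a(0)=1$, the factor $1-a(\bar f)$ lies in $\F_1$, so the numerator has order $\geq 2$, whereas $\bar f$ has order $1$ and $g(\bar f)$ has order $0$ with non-vanishing constant term, making the denominator of order exactly $1$. Hence the quotient lies in $\F_1$, giving $b\in\F_0$ with $b(0)=1$. Associativity then transfers automatically from ordinary matrix multiplication via the correspondence set up in the first step, which completes the proof.
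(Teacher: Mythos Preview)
The paper does not supply a proof of this theorem; it is stated as a cited result from \cite{BHP}, so there is no in-paper argument to compare against. Your proof is correct and follows the natural route: the operator identity $(a|g,f)\cdot p = p_0\,a + \tfrac{tg}{f}\bigl(p(f)-p_0\bigr)$ is the standard way to identify \eqref{1.15} with row-by-column matrix multiplication, and your closure, identity, and inverse computations are all accurate. One small point worth making explicit is that the right inverse you construct is automatically two-sided, since almost-Riordan arrays are invertible lower-triangular matrices and such inverses are unique; you rely on this implicitly when you import associativity from ordinary matrix multiplication, but it would strengthen the write-up to say so.
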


Alp and Kocer give the sequence characterization of almost-Riordan arrays and define the exponential almost-Riordan group in \cite{AK} and \cite{AK24}, respectively. Slowik and the author discuss the total positivity of almost-Riordan arrays in \cite{HS24} and the total positivity of quasi-Riordan arrays in \cite{HS}, respectively, where the set of quasi-Riordan arrays forms a normal subgroup of the almost-Riordan group defined in \cite{He}. 

Let us introduce one more group that is closely related to the groups of Riordan and almost-Riordan arrays.

\begin{definition}\label{def:0.3}\cite{He} 
Let $g\in \F_0$ with $g(0)=1$ and $f\in \F_1$. We call the following matrix a quasi-Riordan array and denote it by $[g,f]$.
\be\label{0.6}
[g,f]:=(g,f,tf,t^2f,\ldots),
\ee
where $g$, $f$, $tf$, $t^2f\cdots$ are the generating functions of the $0$th, $1$st, $2$nd, $3$rd, $\cdots$, columns of the matrix $[g,f]$, respectively. It is clear that $[g,f]$ can be written as 
\be\label{0.8}
[g,f]=\left( \begin{matrix} g(0) & 0\\ (g-g(0))/t & (f/t,t)\end{matrix}\right),
\ee
where $(f,t)=(f,tf,t^2f,t^3f,\ldots)$ and $(f/t, t)$ is an Appell Riordan array. 
Particularly, if $f=tg$, then the quasi-Riordan array $[g,tg]=(g, t)$, a Appell-type Riordan array. 

Clearly, $[g,f]=(g|f/t, t)$. 

Let $A$ and $B$ be $m\times m$ and $n\times n$ matrices, respectively. Then we define the direct sum of $A$ and $B$ by

\be\label{0.5} 
A\oplus B =\left[ \begin{matrix} A &0 \\ 0 &B\end{matrix}\right]_{(m+n)\times(m+n)}.
\ee

In this notation the Riordan array $(g,f)$ satisfies 

\be\label{0.9}
(g,f)=[g,f]([1]\oplus (g,f)).
\ee
\end{definition}

Denote by $q{\cR}$ the set of all quasi-Riordan arrays defined by \eqref{0.6}. In \cite{He} it is shown that $q{\cR}$ is a group with respect to regular matrix multiplication. More precisely, there is the following result.

\begin{theorem}\label{thm:1.2}\cite{He}
The set of all quasi-Riordan arrays $q{\cR}$ is a group, called the quasi-Riordan group, with respect to the multiplication represented in 
\be\label{1.11}
[g,f][d,h]=\left[g+\frac{f}{t}(d-1), \frac{fh}{t}\right],
\ee
which is derived from the first fundamental theorem for quasi-Riordan arrays (FFTQRA),

\be\label{1.12}
[g,f]u=gu(0)+\frac{f}{t}(u-u(0)).
\ee
Hence, $[1,t]$ is the identity of $q{\cR}$. 
\end{theorem}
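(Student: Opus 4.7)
The plan is to proceed in three stages: first establish the FFTQRA formula \eqref{1.12} by expanding the column action of $[g,f]$, then derive the product formula \eqref{1.11} by applying FFTQRA column by column to $[d,h]$, and finally verify the four group axioms (closure, associativity, identity, inverse), with the main work being the explicit construction of inverses inside $q\mathcal R$.

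For FFTQRA, write $u(t)=\sum_{k\geq 0}u_k t^k=u(0)+\sum_{k\geq 1}u_kt^k$. By definition \eqref{0.6}, applying $[g,f]$ to the column vector $u$ means combining the column generating functions with coefficients $u_k$, so
\[
[g,f]u = u_0\, g + \sum_{k\geq 1} u_k\, t^{k-1}f = g\,u(0) + \frac{f}{t}\sum_{k\geq 1}u_k t^k = g\,u(0)+\frac{f}{t}(u-u(0)),
\]
which is exactly \eqref{1.12}. Applying this successively to columns $0,1,2,\dots$ of $[d,h]=(d,h,th,t^2h,\dots)$, and using $d(0)=1$ and $h(0)=0$, the resulting columns become $g+\frac{f}{t}(d-1)$, $\frac{fh}{t}$, $fh$, $tfh$, $t^2fh,\dots$, so the product matrix is precisely $\bigl[\,g+\frac{f}{t}(d-1),\;\frac{fh}{t}\,\bigr]$. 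I still need to check that this lies in $q\mathcal R$: since $d-1\in\mathcal F_1$ and $f/t\in\mathcal F_0$, the first entry equals $g(0)=1$ at $t=0$; and since $f,h\in\mathcal F_1$, the second entry $fh/t\in\mathcal F_1$.

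For the group axioms, associativity is inherited from matrix multiplication. For the identity, a direct substitution in \eqref{1.11} gives $[1,t][d,h]=[1+\tfrac{t}{t}(d-1),\tfrac{th}{t}]=[d,h]$ and $[g,f][1,t]=[g+\tfrac{f}{t}\cdot 0,\tfrac{ft}{t}]=[g,f]$. The key step is the inverse. Given $[g,f]\in q\mathcal R$, I solve
\[
g+\frac{f}{t}(d-1)=1,\qquad \frac{fh}{t}=t,
\]
which forces $h=t^2/f$ and $d=1+\frac{t(1-g)}{f}$. The only obstacle is verifying these candidates are in $\mathcal F_1$ and $\mathcal F_0$ respectively with the correct normalizations: writing $f(t)=t\tilde f(t)$ with $\tilde f(0)\neq 0$ gives $h=t/\tilde f\in\mathcal F_1$, and since $1-g\in\mathcal F_1$ and $f\in\mathcal F_1$ the quotient $t(1-g)/f$ lies in $\mathcal F_1$, so $d(0)=1$. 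A symmetric check confirms this is also a right inverse, completing the proof.
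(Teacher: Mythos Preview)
Your proof is correct. Note that the paper does not actually prove this theorem---it is quoted as a background result from \cite{He}---so there is no in-paper argument to compare against; your derivation of \eqref{1.12} from the column description \eqref{0.6}, the column-by-column verification of \eqref{1.11}, and the explicit inverse $[d,h]=[\,1+t(1-g)/f,\,t^2/f\,]$ with the order checks are all standard and sound.
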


In \cite{BHP}, it has been shown that the quasi-Riordan group is a normal subgroup of the almost-Riordan group. In Slowik and the author's paper \cite{HS23}, the total positivity of quasi-Riordan arrays is discussed. For instance, let $(g(t), f(t))$ be a Riordan array, where $g(t)=\sum_{n\geq 0} g_nt^n$ and $f(t)=\sum_{n\geq 1}f_nt^n$. If the lower triangular matrix 

\begin{equation}\label{eq:1}
\begin{array}{rl}
Q & =[g,f]=(g|f/t,t)=
\left [ \begin{array}{llllll} g_0& 0& 0& 0& 0& \cdots\\
g_1& f_1 & 0& 0& 0&\cdots\\
g_2 &f_2& f_1& 0& 0& \cdots\\
g_3& f_3& f_2& f_1&0&\cdots\\
\vdots &\vdots& \vdots& \vdots&\vdots&\ddots\end{array}\right]
\\
& =\left( g(t), f(t), tf(t), t^{2}f(t),\ldots\right)
\\
\end{array}
\end{equation}
is totally positive (TP), then so is $R=(g,f)$ (cf. \cite{MMW} or \cite{He, HS23}).  Other interesting criteria for total positivity of Riordan arrays can be found in \cite{CLW,CW}.

In Slowik and the author's another paper \cite{HS24}, a simdirect product is given and used to discuss the total positivity of almost-Riordan arrays via the total positivity of quasi-Riordan arrays.

\begin{theorem}\label{thm:2.5}\cite{HS24}
Every almost-Riordan array $(d|\,g,f)$ can be written as the semidirect product 

\be\label{eq:R_factoriz-1}
(d|\, g,f)=[d,tg](1|\,1,f),
\ee
or equivalently, 

\begin{equation}
\label{eq:R_factoriz}
(d|\,g,f)=\left[\begin{array}{c|c}
d_0 & 0  \\
\hline 
\frac{d-d_0}{t}&  (g,t)\\
\end{array}\right]
\left[\begin{array}{c|c}
1 & 0\\
\hline 
0 & (1,f)\\
\end{array}\right].
\end{equation}
\end{theorem}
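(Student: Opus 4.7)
The plan is to reduce everything to the almost-Riordan multiplication formula \eqref{1.15} by first reinterpreting the quasi-Riordan factor $[d,tg]$ inside the almost-Riordan group. From the identity $[g,f]=(g|\,f/t,t)$ recorded just before Theorem~\ref{thm:1.2}, we immediately get
\be\label{eq:planid}
[d,tg]=(d|\,g,t),
\ee
so the right-hand side of \eqref{eq:R_factoriz-1} is the product of two almost-Riordan arrays and it suffices to compute it in $a\cR$.

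Next, I would apply \eqref{1.15} to the pair $(a|\,g,f)=(d|\,g,t)$ and $(b|\,d,h)=(1|\,1,f)$. With $f$ replaced by $t$ in the first factor and $b(t)\equiv 1$ in the second, the correction term $\frac{tg}{f}(b(f)-1)$ collapses to
$$
\frac{tg}{t}\bigl(1-1\bigr)=0,
$$
while the Riordan component becomes $(g\cdot 1(t),\,f(t))=(g,f)$. Thus \eqref{1.15} gives $(d|\,g,t)(1|\,1,f)=(d|\,g,f)$, which is precisely \eqref{eq:R_factoriz-1}.

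For the equivalent block-matrix form \eqref{eq:R_factoriz}, I would use the representation \eqref{1.14}: the block forms of $[d,tg]=(d|\,g,t)$ and of $(1|\,1,f)$ are exactly the two factors displayed in \eqref{eq:R_factoriz}. Standard block multiplication then produces the block
$$
(g,t)(1,f)=(g\cdot 1(t),f(t))=(g,f),
$$
by ordinary Riordan multiplication \eqref{Proddef}, and the off-diagonal block stays $(d-d_0)/t$ because the $(1,f)$ block is multiplied on the left by $0$. The $(1,1)$ entry remains $d_0$, matching the block form of $(d|\,g,f)$.

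I do not anticipate a serious obstacle here: the whole statement is essentially the observation that the ``perturbation'' of the first column (encoded by the quasi-Riordan part $[d,tg]$) and the ``Riordan deformation'' of the remaining columns (encoded by $(1|\,1,f)$) are independent, and the almost-Riordan multiplication rule \eqref{1.15} has been designed precisely so that the cross term $(b(f)-1)$ vanishes whenever the second factor has trivial first-column data $b=1$. The only point to keep straight is the compatibility between the quasi-Riordan identification \eqref{eq:planid} and the block form \eqref{1.14}, which is immediate once the $tg$ inside $[d,tg]$ is recognized as the second-column generating function $t\cdot g$ that appears in Definition~\ref{def:1.3}.
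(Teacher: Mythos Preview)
Your argument is correct. The identification \eqref{eq:planid} is exactly the remark ``Clearly, $[g,f]=(g|f/t,t)$'' in Definition~\ref{def:0.3}, and your application of \eqref{1.15} with $(a|\,g,f)=(d|\,g,t)$ and $(b|\,d,h)=(1|\,1,f)$ is clean: the cross term vanishes because $b\equiv 1$, and the Riordan part collapses by \eqref{Proddef}. The block-matrix verification via \eqref{1.14} is likewise fine.

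Note, however, that the present paper does not actually supply a proof of this theorem: it is quoted from \cite{HS24} as background in the Introduction and stated without argument. So there is no ``paper's own proof'' to compare against here. Your approach is the natural one and is presumably close to what \cite{HS24} does, since the whole point of the multiplication rule \eqref{1.15} is to make such factorizations transparent. One small presentational remark: the symbol $d$ is overloaded (it is both the leading series in $(d|\,g,f)$ and the name of the middle argument in the generic formula \eqref{1.15}); you handle this correctly, but in a written-up version it would be worth renaming one of them to avoid any ambiguity for the reader.
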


The paper will be organized as follows. In next section, we continue the work shown in \cite{He25} to discuss double almost-Riordan arrays and the double almost-Riordan group in a different view. In Section $3$, we continue the work of \cite{He25} on the sequence characterizations of double almost-Riordan arrays and subgroups of the double almost-Riordan group and present production matrices of double almost-Riordan arrays by using a different approach. In Section $4$, we discuss the compression of double almost-Riordan arrays and present their sequence characterization. Finally, in Section $5$, we give a characteristic for the total positivity of double Riordan arrays, by using which we discuss the total positivity for several double almost-Riordan arrays. 

\section{Double almost-Riordan arrays}

\begin{definition}\label{def:3.1}
Let $b, g\in \F_0$ and $f_1,f_2\in \F_1$, where $g$ is even and $f_i$ are odd, i.e., $g=\sum_{n\geq 0} g_{2n} t^{2n}$ and $f_i=\sum_{n\geq 0} f_{i, 2n+1} t^{2n+1}$, 
$i=1,2$. Then we define a double almost-Riordan array by

\be\label{3.1}
(b|g;f_1,f_2)=(b, tg, tgf_1, tgf_1f_2, tg f_1^2f_2, tg f_1^2f_2^2,\ldots).
\ee
\end{definition}

\begin{theorem}\label{thm:3.2}
There two cases of the first fundamental theorems of double almost-Riordan arrays:

\be\label{3.2}
(b|g; f_{1}, f_{2})u(t)=v(t),
\ee
where for $u(t)=\sum_{k\geq 0}u_{2k}t^{2k}$ and $u(t)=\sum_{k\geq 0}u_{2k+1}t^{2k+1}$, we have 

\begin{align}
v(t)=&u_0b+\frac{tg}{f_2}(u(\sqrt{f_{1}f_{2}})-u_0)\quad \mbox{and}\label{3.3}\\
v(t)=&\frac{tg}{\sqrt{f_1f_2}}u(\sqrt{f_{1}f_{2}}),\label{3.4}
\end{align}
respectively. Here, \eqref{3.2} and \eqref{3.3} represents the first case of the first fundamental theorem of double almost-Riordan arrays (the first case of the FTDAR), and \eqref{3.2} and \eqref{3.4} represents the second case of the first fundamental theorem of double almost-Riordan arrays (the second case of the FTDAR). 
\end{theorem}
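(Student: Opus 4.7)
My plan is to prove both identities by directly evaluating the matrix--vector product column by column. Writing
\begin{equation*}
v(t) = (b|g;f_1,f_2)\, u(t) = \sum_{k \geq 0} u_k\, c_k(t),
\end{equation*}
where $c_k(t)$ denotes the generating function of column $k$, I first catalogue these columns from Definition~\ref{def:3.1}: $c_0 = b$, $c_1 = tg$, and for $k \geq 1$ one has $c_{2k} = tg\, f_1^{k} f_2^{k-1}$ while $c_{2k+1} = tg\, f_1^{k} f_2^{k}$. This column bookkeeping (easily checked against the pattern $b,\, tg,\, tgf_1,\, tgf_1f_2,\, tgf_1^2f_2,\, tgf_1^2f_2^2,\dots$) is the only combinatorial input needed.

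For the first case, when $u(t) = \sum_{k \geq 0} u_{2k} t^{2k}$ has only even-indexed coefficients, only $c_0$ and the $c_{2k}$ contribute, giving
\begin{equation*}
v(t) = u_0 b + \sum_{k \geq 1} u_{2k}\, tg\, f_1^{k} f_2^{k-1} = u_0 b + \frac{tg}{f_2} \sum_{k \geq 1} u_{2k} (f_1 f_2)^{k}.
\end{equation*}
Since $u$ is even, $u(\sqrt{f_1 f_2}) = \sum_{k \geq 0} u_{2k}(f_1 f_2)^{k}$, so the residual sum equals $u(\sqrt{f_1 f_2}) - u_0$, yielding \eqref{3.3}. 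For the second case, when $u(t) = \sum_{k \geq 0} u_{2k+1} t^{2k+1}$ is odd, only the $c_{2k+1}$ contribute, and
\begin{equation*}
v(t) = \sum_{k \geq 0} u_{2k+1}\, tg\, f_1^{k} f_2^{k} = \frac{tg}{\sqrt{f_1 f_2}} \sum_{k \geq 0} u_{2k+1} (\sqrt{f_1 f_2})^{2k+1} = \frac{tg}{\sqrt{f_1 f_2}}\, u(\sqrt{f_1 f_2}),
\end{equation*}
which is \eqref{3.4}.

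There is no genuine obstacle: both formulas amount to a rearrangement of series once the column generating functions are read off. The only subtlety worth flagging is the interpretation of $\sqrt{f_1 f_2}$ as a formal power series, a standard device already used in Theorem~\ref{thm:0.2}. Since $f_1, f_2 \in \F_1$ are odd, $f_1 f_2$ is an even series of order $2$ with leading coefficient $f_{1,1} f_{2,1} \neq 0$, so $\sqrt{f_1 f_2} \in \F_1$ is well-defined once the leading coefficient admits a square root in the base field; moreover $tg/f_2$ and $tg/\sqrt{f_1 f_2}$ are then legitimate elements of $\F_0$. With these formal points in place, both identities follow by inspection.
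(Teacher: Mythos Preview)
Your proof is correct and follows essentially the same approach as the paper: both evaluate the matrix--vector product directly by reading off the column generating functions from Definition~\ref{def:3.1}, separating the even- and odd-indexed contributions, and then recognizing the resulting sums as $u(\sqrt{f_1f_2})-u_0$ and $u(\sqrt{f_1f_2})$ respectively. Your explicit cataloguing of the columns as $c_{2k}=tg\,f_1^kf_2^{k-1}$ and $c_{2k+1}=tg\,f_1^kf_2^k$, together with the remark on well-definedness of $\sqrt{f_1f_2}$ in $\F_1$, is a slightly cleaner presentation of the same computation.
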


\begin{proof} 
If $u(t)=\sum_{k\geq 0}u_{2k}t^{2k}$, then by \eqref{3.2} 

\begin{align*}
&(b|g;f_1,f_2)u(t)\\
=&(b, tg, tgf_1, tgf_1f_2, tgf_1(f_1f_2), tg(f_1f_2)^2,\ldots) (u_0, 0, u_2, 0, u_4, 0, u_6,0\cdots)^T\\
=&u_0b+tgf_1(u_2+u_4(f_1f_2)+u_6(f_1f_2)^2+\cdots\\
=&u_0b +tgf_1\sum_{k\geq 1} u_{2k}(\sqrt{f_1f_2})^{2k-2}\\
=&u_0b+\frac{tg}{f_2}\sum_{k\geq 1}u_{2k}(\sqrt{f_1f_2})^{2k},
\end{align*}
in which the rightmost term implies \eqref{3.3}. Similarly, if $u(t)=\sum_{k\geq 0}u_{2k+1}t^{2k+1}$, 
then from \eqref{3.2} 

\begin{align*}
&(b|g;f_1,f_2)u(t)\\
=&(b, tg, tgf_1, tgf_1f_2, tgf_1(f_1f_2), tg(f_1f_2)^2,\ldots) (0, u_1, 0, u_3, 0, u_5, 0, \cdots)^T\\
=&tg(u_1+u_3(f_1f_2)+u_5(f_1f_2)^2+\cdots\\
=&tg\sum_{k\geq 0} u_{2k+1}(\sqrt{f_1f_2})^{2k}\\
=&\frac{tg}{\sqrt{f_1f_2}}\sum_{k\geq 0}u_{2k+1}(\sqrt{f_1f_2})^{2k+1},
\end{align*}
in which the rightmost term implies \eqref{3.4}. 
\end{proof}

By using the first fundamental theorems of double almost-Riordan arrays, we may establish a multiplication operator in the set of double almost-Riordan arrays. 

\begin{theorem}\label{thm:3.3}
Let $(b|g;f_1,f_2)$ and $(c|d;h_1,h_2)$ be two double almost-Riordan arrays, where $b$ and $c$ are even formal power series, $b=\sum_{k\geq 0} b_{2k} t^{2k}$ and $c=\sum_{k\geq 0} c_{2k}t^{2k}$. Then 

\begin{align}\label{3.6}
&(b|g;f_1,f_2)(c|d;h_1,h_2)\nonumber\\
=&( c_0b+\frac{tg}{f_2}(c(\sqrt{f_1f_2})-c_0)| gd(\sqrt{f_1f_2}); \sqrt{\frac{f_1}{f_2}}h_1(\sqrt{f_1f_2}),\sqrt{\frac{f_2}{f_1}}h_2(\sqrt{f_1f_2})),
\end{align}
and 

\begin{align}\label{3.7}
&(b|g;f_1,f_2)^{-1}\nonumber\\
=&\left( \frac{1}{b_0}+\frac{f_2(\overline{f_{12}})}{b_0\overline{f_{12}}g(\overline{f_{12}})}(b_0-b(\overline{f_{12}}))|
\frac{1}{g(\overline{f_{12}})};
\overline{f_{12}}\frac{t}{f_1(\overline{f_{12}})},\overline{f_{12}}\frac{t}{f_2(\overline{f_{12}})}
\right),
\end{align}
where $b_0\not= 0$ because $b\in \F_0$, and $\overline{f_{12}}$ is the compositional inverse of $\sqrt{f_1f_2}$, i.e., 

\[
\overline{f_{12}}(\sqrt{f_1f_2})=\sqrt{f_1f_2}(\overline{f_{12}})=t.
\] 
\end{theorem}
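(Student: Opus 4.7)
The plan is to derive both formulas by working column-by-column on the second factor and invoking Theorem \ref{thm:3.2}. From Definition \ref{def:3.1} the columns of $(c|d;h_1,h_2)$ have generating functions
\[
c,\; td,\; tdh_1,\; tdh_1h_2,\; tdh_1^2h_2,\; tdh_1^2h_2^2,\ldots
\]
Because $c,d$ are even and $h_1,h_2$ are odd, the column in position $2k$ has an even generating function while the one in position $2k+1$ is odd. Thus the first case of the FTDAR, equation \eqref{3.3}, is the right tool for even-indexed columns, and the second case, equation \eqref{3.4}, is the right tool for odd-indexed columns.

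For column $0$, formula \eqref{3.3} applied to $u=c$ with $u_0=c_0$ immediately returns $c_0b+\tfrac{tg}{f_2}\bigl(c(\sqrt{f_1f_2})-c_0\bigr)$, exactly the first slot of \eqref{3.6}. For $k\geq 1$ the $2k$-th column is $u=tdh_1^k h_2^{k-1}$ with no constant term, so \eqref{3.3} collapses to $\tfrac{tg}{f_2}u(\sqrt{f_1f_2})$; for $k\geq 0$ the $(2k+1)$-st column is $u=td(h_1h_2)^k$ and \eqref{3.4} returns $\tfrac{tg}{\sqrt{f_1f_2}}u(\sqrt{f_1f_2})$. Applying the elementary identities $\sqrt{f_1f_2}/f_2=\sqrt{f_1/f_2}$ and $\sqrt{f_1f_2}/f_1=\sqrt{f_2/f_1}$, the resulting expressions are precisely the columns of the double almost-Riordan array on the right-hand side of \eqref{3.6}, with $G=gd(\sqrt{f_1f_2})$, $F_1=\sqrt{f_1/f_2}\,h_1(\sqrt{f_1f_2})$, and $F_2=\sqrt{f_2/f_1}\,h_2(\sqrt{f_1f_2})$. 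A brief parity check ($G$ even, $F_1,F_2$ odd, new first entry in $\F_0$ since $c_0b_0\neq 0$) confirms the candidate is genuinely a double almost-Riordan array.

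To obtain \eqref{3.7} I would require $(b|g;f_1,f_2)(B|G;F_1,F_2)=(1|1;t,t)$, where the right-hand side is the identity of the new product (easily verified from the FTDAR with $b=g=1$ and $f_1=f_2=t$). Equating slot by slot with \eqref{3.6} yields
\[
G(\sqrt{f_1f_2})=\tfrac{1}{g},\quad F_1(\sqrt{f_1f_2})=t\sqrt{\tfrac{f_2}{f_1}},\quad F_2(\sqrt{f_1f_2})=t\sqrt{\tfrac{f_1}{f_2}},
\]
together with one scalar-plus-first-column equation determining $B$. Composing each with $\overline{f_{12}}$ and using $f_1(\overline{f_{12}})f_2(\overline{f_{12}})=t^2$ to rewrite $\sqrt{f_2/f_1}\circ\overline{f_{12}}=t/f_1(\overline{f_{12}})$ delivers the formulas for $G,F_1,F_2$ in \eqref{3.7}. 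Evaluating the first-slot equation at $t=0$ forces $B_0=1/b_0$; solving it for $B(\sqrt{f_1f_2})-B_0$ and then composing with $\overline{f_{12}}$ produces the first slot of \eqref{3.7}.

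The main nuisance will be the first entry of the inverse: it mixes the ``$u_0$-axiom'' of \eqref{3.3} with the compositional-inverse substitution, so one must carefully isolate $B(\sqrt{f_1f_2})-B_0$ before composing with $\overline{f_{12}}$ and keep track of the factor $f_2/(tg)$ that carries across. Once that is done, everything else is clean manipulation of even/odd power series.
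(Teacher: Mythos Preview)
Your argument is correct and, for the multiplication formula \eqref{3.6}, essentially coincides with what the paper has in mind: the paper states that \eqref{3.6} follows ``by using the first fundamental theorems of double almost-Riordan arrays'' and does not spell out the column-by-column computation, so your write-up actually supplies more detail than the paper does there.

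For the inverse formula \eqref{3.7} your route differs from the paper's. You \emph{derive} $(B|G;F_1,F_2)$ by imposing $(b|g;f_1,f_2)(B|G;F_1,F_2)=(1|1;t,t)$, equating slots via \eqref{3.6}, and then composing with $\overline{f_{12}}$ to solve for each piece. The paper instead \emph{verifies}: it takes the candidate in \eqref{3.7} as given, plugs it into \eqref{3.6}, and checks that the product collapses to $(1|1;t,t)$; it then separately checks that $(1|1;t,t)$ is a two-sided identity. Your constructive approach has the advantage of explaining where the formula comes from and of making the substitution $f_1(\overline{f_{12}})f_2(\overline{f_{12}})=t^2$ do real work; the paper's verification is shorter but leaves the reader to reverse-engineer the expression. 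Either way, one should note (as the paper does) that $(1|1;t,t)$ is the identity, so that the one-sided equation you solve indeed yields a genuine inverse.
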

\begin{proof}

By using \eqref{3.6} with 

\[
c(t)= \frac{1}{b_0}+\frac{f_2(\overline{f_{12}})}{b_0\overline{f_{12}}g(\overline{f_{12}})}(b_0-b(\overline{f_{12}}))
\]
where $c_0=1/b_0$, we have 

\begin{align*}
&(b|g;f_1,f_2)\left( \frac{1}{b_0}+\frac{f_2(\overline{f_{12}})}{b_0\overline{f_{12}}g(\overline{f_{12}})}(b_0-b(\overline{f_{12}}))| \frac{1}{g(\overline{f_{12}})};
\overline{f_{12}}\frac{t}{f_1(\overline{f_{12}})},\overline{f_{12}}\frac{t}{f_2(\overline{f_{12}})}\right)\\
=&\left(\frac{b}{b_0}+\frac{tg}{f_2}\left( \frac{1}{b_0}+\frac{f_2}{b_0tg}(b_0-b)-\frac{1}{b_0}\right)| g\frac{1}{g}; t\sqrt{\frac{f_1}{f_2}}\frac{\sqrt{f_1f_2}}{f_1}, t\sqrt{\frac{f_2}{f_1}}\frac{\sqrt{f_1f_2}}{f_2}\right)\\
=&(1|1;t,t).
\end{align*}
To show that $(1|1;t,t)$ is the identity of the set of double almost-Riordan arrays, we calculate 

\[
(b|g;f_1,f_2)(1|1;t,t)=(b|g;f_1,f_2)=(1|1;t,t)(b|g;f_1,f_2)
\]
and complete the proof.
\end{proof}

From now on, for any $(b|g;f_1,f_2)\in {\cal D}a{\cal R}$, we always assume $b,g\in\F_0$ are even and $f_1,f_2\in\F_1$ are odd, namely, 

\begin{align*}
&b(t)=\sum_{k\geq 0} b_{2k}t^{2k},\quad g(t)=\sum_{k\geq 0} g_{2k}t^{2k},\quad \mbox{and}\\
&f_1(t)=\sum_{k\geq 0} f_{1,2k+1}t^{2k+1},\quad f_2(t)=\sum_{k\geq 0} f_{2,2k+1}t^{2k+1}.
\end{align*}

\begin{theorem}\label{thm:3.4}
The set of all double almost-Riordan arrays $(b|g;f_1f_2)$ with even $b,g\in \F_0$ and odd $f_1f_2\in \F_1$ forms a 
group called the double almost-Riordan group and denoted by ${\cD}a{\cR}$ under the multiplication operator defined by \eqref{3.6} with the identity $(1|1;t,t)$.
\end{theorem}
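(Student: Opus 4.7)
The plan is to verify the four group axioms for the operation $\cdot$ defined in \eqref{3.6} on the set of double almost-Riordan arrays. Theorem \ref{thm:3.3} already supplies the identity $(1|1;t,t)$ and an explicit candidate inverse \eqref{3.7}, and it verifies that the latter is a right inverse. Thus the substantive tasks remaining are closure, associativity, and the upgrade of the right inverse to a two-sided inverse.

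For closure, I would check that the right-hand side of \eqref{3.6} satisfies the parity and order hypotheses of Definition \ref{def:3.1}. Writing $f_i(t)=t\tilde f_i(t)$ with $\tilde f_i$ even and $\tilde f_i(0)\neq 0$, the product $f_1f_2=t^2\tilde f_1\tilde f_2$ is even with nonzero leading coefficient in $t^2$, so $\sqrt{f_1f_2}=t\sqrt{\tilde f_1\tilde f_2}$ is a bona fide odd power series of order $1$, and $\sqrt{f_i/f_{3-i}}=\sqrt{\tilde f_i/\tilde f_{3-i}}$ is an even power series with nonzero constant term. From these facts I verify in turn: $gd(\sqrt{f_1f_2})$ is even with nonzero constant term $g(0)d(0)$ (composition of an even series with an odd-of-order-$1$ series); each factor $\sqrt{f_i/f_{3-i}}\,h_i(\sqrt{f_1f_2})$ is odd of order $1$ (even times odd-of-order-$1$); and $c_0b+(tg/f_2)(c(\sqrt{f_1f_2})-c_0)$ is even of order $0$, since $tg/f_2=g/\tilde f_2$ is even and $c(\sqrt{f_1f_2})-c_0$ is a power series in $f_1f_2$ divisible by $f_1f_2$.

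For associativity, rather than expanding both sides of $[(b|g;f_1,f_2)(c|d;h_1,h_2)](e|q;j_1,j_2)=(b|g;f_1,f_2)[(c|d;h_1,h_2)(e|q;j_1,j_2)]$ using \eqref{3.6} directly, I would observe that \eqref{3.6} is precisely the column-by-column image of the ordinary row-by-column product of the two infinite lower-triangular matrices. Indeed, the $k$th column of $(c|d;h_1,h_2)$ is an even power series when $k$ is even and an odd power series when $k$ is odd, so applying $(b|g;f_1,f_2)$ to it via the corresponding case of the FTDAR (Theorem \ref{thm:3.2}) reproduces the $k$th column of the right-hand side of \eqref{3.6}; for instance, column $0$ uses \eqref{3.3} with $u=c$ to produce $c_0b+(tg/f_2)(c(\sqrt{f_1f_2})-c_0)$, while column $1$ uses \eqref{3.4} with $u=td$ to produce $tg\cdot d(\sqrt{f_1f_2})$. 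Consequently $\cdot$ agrees with matrix multiplication of the underlying infinite matrices, and associativity is inherited from that. The left-inverse identity then follows from associativity combined with the right-identity and right-inverse already established in Theorem \ref{thm:3.3}.

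The main technical obstacle lies in the parity bookkeeping in the closure step, specifically making sense of $\sqrt{f_1f_2}$ and $\sqrt{f_i/f_{3-i}}$ as formal power series of the correct parity and order (which hinges on the order-$1$ hypothesis $f_{i,1}\neq 0$ and on factoring out the leading $t$). Once these fractional expressions are grounded, the identification of \eqref{3.6} with matrix multiplication is routine and the remaining group axioms drop out.
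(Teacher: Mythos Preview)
Your proposal is correct, and in fact it is more complete than the paper's own proof: you explicitly address closure (the parity and order verification for $\sqrt{f_1f_2}$, $\sqrt{f_i/f_{3-i}}$, and the resulting entries of \eqref{3.6}) and the passage from right inverse to two-sided inverse, both of which the paper leaves implicit.

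The genuine difference is in how associativity is obtained. The paper proves it by brute force: it expands both
\[
((b|g;f_1,f_2)(c|d;h_1,h_2))(e|u;v_1,v_2)\quad\text{and}\quad (b|g;f_1,f_2)((c|d;h_1,h_2)(e|u;v_1,v_2))
\]
using \eqref{3.6} directly and compares the four component functions. Your approach instead recognises that \eqref{3.6} is nothing but the row-by-column matrix product, by checking column-by-column via the two cases of the FTDAR (Theorem \ref{thm:3.2}); associativity then comes for free from associativity of matrix multiplication. Your route is shorter and more conceptual, and it has the side benefit of making the left-inverse property automatic once the right-inverse and right-identity from Theorem \ref{thm:3.3} are in hand. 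The paper's direct expansion, while longer, has the minor advantage of being entirely self-contained within the formal-power-series description, without appealing back to the matrix realisation.
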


\begin{proof}
It is sufficient to show the following associativity for any elements $(b|g;f_1,f_2)$, $(c|d; h_1, h_2)$ and $(e|u; v_1, v_2)$:

\begin{align*}
&((b|g;f_1,f_2)(c|d; h_1, h_2))(e|u; v_1, v_2)\\
=&\left( c_0 b +\frac{tg}{f_2}(c(\sqrt{f_1f_2})-c_0)|gd(\sqrt{f_1f_2});
\sqrt{\frac{f_1}{f_2}}h_1(\sqrt{f_1f_2}), \sqrt{\frac{f_2}{f_1}}h_2(\sqrt{f_1f_2}
\right)(e|u;v_1v_2)\\
=& \left( e_0c_0b+\frac{tg}{f_2}\left( e_0(\sqrt{f_1f_2})+\frac{\sqrt{f_1f_2}d(\sqrt{f_1f_2})}{h_2(\sqrt{f_1f_2})}(e(\sqrt{h_1(\sqrt{f_1f_2})h_2(\sqrt{f_1f_2})})-e_0)-e_0c_0\right)| \right.\\
& tgd(\sqrt{f_1f_2})u(\sqrt{h_1(\sqrt{f_1f_2})h_2(\sqrt{f_1f_2})}); \sqrt{\frac{f_1h_1(\sqrt{f_1f_2})}{f_2h_2(\sqrt{f_1f_2})}}v_1(\sqrt{h_1(\sqrt{f_1f_2})h_2(\sqrt{f_1f_2})}), \\
&\left. \sqrt{\frac{f_2h_2(\sqrt{f_1f_2})}{f_1h_1(\sqrt{f_1f_2})}}v_2(\sqrt{h_1(\sqrt{f_1f_2})h_2(\sqrt{f_1f_2})})\right)\\
=& (b|g;f_1f_2)\left( e_0c+\frac{td}{h_2}(e(\sqrt{h_1h_2})-e_0)| du(\sqrt{h_1h_2}); \sqrt{\frac{h_1}{h_2}}v_1(\sqrt{h_1h_2}),\sqrt{\frac{h_1}{h_2}}v_1(\sqrt{h_1h_2})\right) \\
=& (b|g;f_1,f_2)((c|d; h_1, h_2)(e|u; v_1, v_2)).
\end{align*}
\end{proof}

The structures of the double almost-Riordan group is studied in \cite{He24}.

\section{Sequence characterization of double almost-Riordan arrays}

We now give a sequence characterization of a double almost Riordan array in ${\cD}a{\cR}$. Inspired by Branch, Davenport, Frankson, Jones, and Thorpe \cite{BDFJT} and  Davenport, Frankson, Shapiro, and Woodson \cite{DFSW}, we consider $D=(b|g;f_1,f_2)$ as 

\begin{align}\label{4.9}
D=&(b, tg, tgf, tg(f_1f_2), tgf_1(f_1f_2), tg(f_1f_2)^2, \ldots)\nonumber\\
=&(b,0,0,\ldots)+(0,tg, 0, tg(f_1f_2), 0, tg(f_1f_2)^2, 0, \ldots)\nonumber\\
&+(0,0,tgf_1, 0, tgf_1(f_1f_2), 0, tgf_1(f_1f_2)^2, 0\ldots)
\nonumber\\
=&D_0+D_1+D_2.
\end{align}
After omitting zero columns and top zero rows, we denote the remaining $D_1$ and $D_2$ shown above by $D_1^*$ and $D_2^*$, respectively. Then $D_1^*=(g, f_1f_2)=(d^{(1)}_{n,k})_{n,k\geq 0}$ and $D_2^*=(gf_1/t, f_1f_2)=(d^{(2)}_{n,k})_{n,k\geq 0}$ are Riordan arrays. Hence, $(b|g;f_1,f_2)$ has a $W$-sequence, two $Z$-sequences, denoted by $Z_1$- and $Z_2$-sequences, and a $A$-sequence in this new view, while the view shown in \cite{He24} provides $W$-sequence, a $Z$-sequnce, and two $A$-sequences: $A_1$-and $A_2$-sequences. 

By using \eqref{4.10} and \eqref{4.11}, we have the following result.

\begin{theorem}\label{thm:new-3.1}
Let $(b|g;f_1,f_2)$ be a double almost-Riordan array, and let $A(t)=\sum_{k\geq 0}a_kt^{2k}$, $Z_1(t)=\sum_{k\geq 0} z_{1,k} t^{2k}$, $Z_2(t)=\sum_{k\geq 0} z_{2,k} t^{2k}$ and $W(t)=\sum_{k\geq 0} w_k t^{2k}$ be the generating functions of $A$-, $Z_1$-, $Z_2$-, and $W$-sequences, respectively. Then

\begin{align}\label{8.0}
&A(t)=\frac{t^2}{\overline f_{12}^2},\\
&Z_1(t)=  \frac{1}{\overline{f_{12}}^2}\left( 1-\frac{g_0}{g(\overline{f_{12}})}\right)\label{8.1}\\
&Z_2(t)=  \frac{g_0f_{1,1}}{b_0}+\frac{t^2}{\overline{f_{12}}^2}-\frac{g_0f_{1,1}b(\overline{f_{12}})f_2(\overline{f_{12}})}{b_0\overline{f_{12}}g(\overline{f_{12}})},\label{8.2}\\
&W(t)=\frac{f_2(\overline{f_{12}})((1-w_0\overline{f_{12}}^2)b(\overline{f_{12}})-b_0)}{\overline{f_{12}}^3g(\overline{f_{12}})}+w_0,\quad w_0=b_2/b_0,\label{8.3}
\end{align}
where $f_{1,1}=[t] f_1(t)$, and $\overline{f_{12}}$ is the compositional inverse of $\sqrt{f_1f_2}$.
\end{theorem}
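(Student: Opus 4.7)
The plan is to read each of the four generating functions directly off the recurrences at (4.10)--(4.11), exploiting the decomposition $D=D_0+D_1+D_2$ from \eqref{4.9}. Writing $\psi:=\overline{f_{12}}$ for brevity, the key algebraic ingredient is the defining identity
$f_1(\psi(t))\,f_2(\psi(t))=t^{2}$ (equivalently $\sqrt{f_{1}f_{2}}(\psi(t))=t$), which converts every formula involving the compositional inverse. Because $D_{1}^{\ast}=(g,f_{1}f_{2})$ and $D_{2}^{\ast}=(gf_{1}/t,f_{1}f_{2})$ are Riordan-type arrays sharing the same even, order-$2$ multiplier $f_{1}f_{2}$, they contribute a \emph{single} common $A$-sequence together with two distinct $Z$-type sequences; the column $b$ of $D_{0}$ then forces the second $Z$-sequence and the $W$-sequence to carry a $b$-contribution that cannot be read off from $D_{1}^{\ast}$ or $D_{2}^{\ast}$ in isolation.

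For \eqref{8.0} I would start from the double-Riordan identity $f_{1}f_{2}=t^{2}A(\sqrt{f_{1}f_{2}})$ inherited from the submatrix $(g;f_{1},f_{2})$ obtained by deleting column $0$ and the top row of $(b|g;f_{1},f_{2})$. Setting $s=\sqrt{f_{1}f_{2}}$ and using $t=\psi(s)$ gives $\psi(s)^{2}A(s)=s^{2}$, hence $A(t)=t^{2}/\psi(t)^{2}$. For \eqref{8.1}, $Z_{1}$ is the $Z$-sequence of the Riordan structure on $D_{1}^{\ast}$: $g=g_{0}/(1-t^{2}Z_{1}(\sqrt{f_{1}f_{2}}))$ solves to $Z_{1}(\sqrt{f_{1}f_{2}})=(1-g_{0}/g)/t^{2}$, and the same substitution yields \eqref{8.1}.

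The core of the argument is \eqref{8.2}--\eqref{8.3}. For $Z_{2}$ I would take the recurrence
\[
d_{n,2}=z_{2,0}\,d_{n-2,0}+z_{2,1}\,d_{n-2,2}+z_{2,2}\,d_{n-2,4}+\cdots
\]
and pass to column generating functions (column $0$ is $b$; column $2k$ for $k\geq 1$ is $tgf_{1}(f_{1}f_{2})^{k-1}$) to obtain $gf_{1}/t=z_{2,0}\,b+(tg/f_{2})\bigl(Z_{2}(\sqrt{f_{1}f_{2}})-z_{2,0}\bigr)$. Solving gives $Z_{2}(\sqrt{f_{1}f_{2}})=z_{2,0}+f_{1}f_{2}/t^{2}-z_{2,0}\,bf_{2}/(tg)$, and substituting $t=\psi(s)$ together with $(f_{1}f_{2})(\psi)=s^{2}$ produces \eqref{8.2}; the leading coefficient $z_{2,0}=g_{0}f_{1,1}/b_{0}$ is pinned down by $n=2$, using $d_{2,2}=g_{0}f_{1,1}$ and $d_{0,0}=b_{0}$. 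The argument for \eqref{8.3} is structurally identical: the $W$-recurrence $d_{n,0}=w_{0}\,d_{n-2,0}+\sum_{k\geq 1}w_{k}\,d_{n-2,2k}$, with $w_{0}=b_{2}/b_{0}$ fixed by $n=2$, becomes the generating-function identity $b-b_{0}=w_{0}t^{2}b+(t^{3}g/f_{2})\bigl(W(\sqrt{f_{1}f_{2}})-w_{0}\bigr)$, which gives \eqref{8.3} after the same substitution.

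The only real obstacle is careful bookkeeping: one must consistently distinguish the two roles of the variable in expressions like $W(\sqrt{f_{1}f_{2}})$ when inverting via $t\mapsto\psi(t)$, and must invoke $f_{1}(\psi(t))f_{2}(\psi(t))=t^{2}$ at the right moment to convert $1/(f_{1}f_{2})$-denominators into the $f_{2}(\psi)/\psi^{3}$ shape appearing in \eqref{8.3}. Once this is done, the four generating functions fall out algebraically.
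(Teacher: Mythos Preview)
Your proposal is correct and follows essentially the same approach as the paper: both derive each of $A$, $Z_1$, $Z_2$, $W$ by writing the defining recurrence on the entries $d_{n,k}$, passing to column generating functions, and then substituting $t\mapsto\overline{f_{12}}$ using $(f_1f_2)(\overline{f_{12}})=t^2$. The only cosmetic difference is that for the $A$-sequence the paper verifies the identity $f_1f_2=t^2A(\sqrt{f_1f_2})$ separately for odd and even column indices from the entrywise recurrence, whereas you invoke it as a known double-Riordan fact inherited from the submatrix $(g;f_1,f_2)$; the content is the same.
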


\begin{proof} By the view of $D^*_i$, $i=1,2$, shown above, for $k\geq 2$ we have 

\be\label{8.0-0}
d_{n,k}=a_0d_{n-2,k-2}+a_1d_{n-2, k}+a_2d_{n-2, k+2}+\cdots =\sum_{j\geq 0} a_j d_{n-2, k+2(j-1)},
\ee
or equivalently, for $k=2\ell+1$ 

\be\label{4.-1}
[t^n]tg(f_1f_2)^{\ell}=[t^{n-2}]tg \sum_{j\geq 0} a_j(f_1f_2)^{\ell+j-1}=[t^{n-2}]tg(f_1f_2)^{\ell-1}A(\sqrt{f_1f_2}),
\ee
and for $k=2\ell$

\be\label{4.-2}
[t^n]tgf_1(f_1f_2)^{\ell}=[t^{n-2}]tg f_1\sum_{j\geq 0} a_j(f_1f_2)^{\ell+j-1}=[t^{n-2}]tgf_1(f_1f_2)^{\ell-1}A(\sqrt{f_1f_2}).
\ee
Hence, from both \eqref{4.-1} and \eqref{4.-2}, we have 

\[
t^2A(\sqrt{f_1f_2})=f_1f_2,
\]
Substituting $t=\overline{f_{12}}$, the compositional inverse ot $\sqrt{f_1f_2}$ into the last equation and noting $(f_1f_2)(\overline {f_{12}})=t^2$ yields 

\[
\overline{f_{12}}^2A(t)=t^2,
\]
i.e., \eqref{8.0}. 

Similarly, from 

\[
d_{n,1}=z_{1,0}d_{n-2,1}+z_{1,1}d_{n-2, 3}+z_{1,2}d_{n-2, 5}+\cdots =\sum_{j\geq 0} z_{1,j} d_{n-2, 2j+1},
\]
we have 

\[
[t^n]tg=[t^{n-2}]tg\sum_{j\geq 0} z_{1,j}(\sqrt{f_1f_2})^{2j},
\]
which implies 

\[
\frac{t(g-g_0)}{t^2}=tgZ_1(\sqrt{f_1f_2}).
\]
Thus, 

\[
g(t)=\frac{g_{0}}{1-t^2Z_1(\sqrt{f_1f_2})},
\]
or equivalently, 

\[
Z_1(\sqrt{f_1f_2})= \frac{1}{t^2}\left( 1-\frac{g_0}{g(t)}\right),
\]
which implies \eqref{8.1}. 

By using 

\[
d_{n,2}=z_{2,0}d_{n-2,0}+z_{2,1}d_{n-2,2}+z_{2,2}d_{n-2,4}+\ldots, 
\]
where $z_{2,0}=f_{1,1}g_0/b_0$ with $f_{1,1}=f'_1(0)$, we have $Z_2(t)$, the generating function of the $Z_2$-sequence $(z_{2,0}, z_{2,1},z_{2,2},\ldots)$, for $D^*_2$ satisfying

\begin{align*}
[t^n]tgf_1=&[t^{n-2}]\left( z_{2,0}b+tgf_1(z_{2,1}+z_{2,2}(f_1f_2)+z_{2,3}(f_1f_2)^2+\cdots)\right)\\
=&[t^{n}]t^2\left(\frac{f_{1,1}g_0}{b_0}b+\frac{tg}{f_2}\left(Z(\sqrt{f_1f_2})-\frac{f_{1,1}g_0}{b_0}\right)\right).
\end{align*}
Thus,

\begin{align*}
Z_2(\sqrt{f_1f_2})=& \frac{f_{1,1}g_0}{b_0}+\frac{f_2}{t^2g} \left(gf_1-tb\frac{f_{1,1}g_0}{b_0}\right)\\
=&\frac{f_{1,1}g_0}{b_0}+\frac{1}{b_0t^2g}\left(b_0gf_1f_2-f_{1,1}g_0tbf_2\right),
\end{align*}
which implies \eqref{8.2}. 

Let $(d_{n,k})_{n,k\geq 0}$ be any infinite lower triangular array with $d_{n,n}\not= 0$, and let its first column has even generating function $b(t)=\sum_{k\geq 0} b_kt^{2k}$. Then we denote $w_0=d_{2,0}/d_{0,0}=b_2/b_0$ and use 

\be\label{8.4}
d_{n,0}=w_{0}d_{n-2,0}+w_{1}d_{n-2,2}+w_{2}d_{n-2,4}+\cdots 
\ee
to determine $w_1$, $w_2, \ldots$, uniquely and recursively. For instance, 

\[
w_1=\frac{d_{4,0}-w_0d_{2,0}}{d_{2,2}}=\frac{d_{0,0}d_{4,0}-d_{2,0}^2}{d_{0,0}d_{2,2}},
\]
etc. Finally, the equivalence of \eqref{8.3} and \eqref{8.4} can be seen from the equivalence of \eqref{8.4} and 

\[
[t^n]b=[t^n]t^2\left(w_0b+tgf_1(w_1+w_2(f_1f_2)+\cdots\right),\quad n\geq 2.
\]
The last expression is 

\[
\frac{b(t)-b_0}{t^2}=w_0b+\frac{tg}{f_2}(W(\sqrt{f_1f_2})-w_0),
\]
or equivalently, equation \eqref{8.3}. 
\end{proof}

We may using production matrix to represent the sequences characterizations of $(b|g;f_1,f_2)$ in terms of the view shown in \cite{BDFJT, DFSW}. 

\begin{theorem}\label{thm:4.3-2}
Let $(b|g;f_1,f_2)\in {\cD}a{\cR}$. Then, $(b|g;f_1,f_2)$ has a production matrix 

\be\label{4.2-2}
P=\left( W(t), tZ_1(t), Z_2(t), tA(t), t^2A(t), t^3A(t), \ldots \right),
\ee
where

\begin{align}
& W(t)=\frac{b_2}{b_0}+\frac{f_2(\overline{f_{12}})}{b_0\overline{f_{12}}^3g(\overline{f_{12}})}(b(\overline{f_{12}})(b_0-b_2\overline{f_{12}}^2)-b_0^2),\label{4.10}\\
&Z_1(t)=\frac{1}{\overline{f_{12}}^2}\left(1-\frac{g_0}{g(\overline{f_{12}})}\right),\label{4.11}\\
&Z_2(t)=\frac{g_0f_{1,1}}{b_0}+\frac{t^2}{\overline{f_{12}}^2}-\frac{g_0f_{1,1}b(\overline{f_{12}})f_2(\overline{f_{12}})}{b_0\overline{f_{12}}g(\overline{f_{12}})},\label{4.12}\\
%\frac{1}{b_0\overline{f_{12}}^2g(\overline{f_{12}})}\left(b_0t^2g(\overline{f_{12}})-f_{1,1}g_0\overline{f_{12}}b(\overline{f_{12}})f_2(\overline{f_{12}})\right),\label{4.12}\\
&A(t)=\frac{t^2}{\overline{f_{12}}^2},\label{4.13}
\end{align}
where $f_{1,1}=[t] f_1(t)$, and $\overline{f_{12}}$ is the compositional inverse of $\sqrt{f_1f_2}$.
\end{theorem}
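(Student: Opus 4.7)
The plan is to derive Theorem \ref{thm:4.3-2} directly from Theorem \ref{thm:new-3.1}. Theorem \ref{thm:new-3.1} already supplies closed-form expressions for the four generating functions $A(t)$, $Z_1(t)$, $Z_2(t)$, and $W(t)$ attached to $(b|g;f_1,f_2)$ via the recurrences \eqref{8.0-0} and \eqref{8.4} (and the analogous relations extracted for the columns $k=1,2$). What remains is to package these four sequences as the columns of a production matrix $P$ implementing the two-step recurrence $d_{n,k}=\sum_{j\ge 0}p_{j,k}\,d_{n-2,j}$ for $n\ge 2$, in the spirit of \cite{BDFJT, DFSW}; this is forced on us because every column of $(b|g;f_1,f_2)$ has a fixed parity of nonzero rows, so the natural shift is $U^2$ rather than $U$.

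The first step is to read off column $0$ of $P$. The $W$-recurrence \eqref{8.4} asserts $d_{n,0}=\sum_{k\ge 0} w_k d_{n-2,2k}$, so the nonzero entries of column $0$ of $P$ are $p_{2k,0}=w_k$, giving column generating function $\sum_{k\ge 0}w_k t^{2k}=W(t)$; the explicit form \eqref{4.10} then follows by rearranging \eqref{8.3} after substituting $w_0=b_2/b_0$. The second step treats columns $1$ and $2$: the $Z_1$-recurrence used in deriving \eqref{8.1} reads $d_{n,1}=\sum_{k\ge 0}z_{1,k}d_{n-2,2k+1}$, so $p_{2k+1,1}=z_{1,k}$, and column $1$ of $P$ has generating function $tZ_1(t)$; similarly $d_{n,2}=\sum_{k\ge 0}z_{2,k}d_{n-2,2k}$ produces column generating function $Z_2(t)$ with the formula \eqref{4.12}. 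The third step handles columns $k\ge 3$: here the $A$-sequence recurrence \eqref{8.0-0}, derived via \eqref{4.-1} and \eqref{4.-2}, shows $d_{n,k}=\sum_{j\ge 0}a_j\,d_{n-2,k-2+2j}$, so $p_{k-2+2j,k}=a_j$, yielding column generating function $t^{k-2}A(t)$. Assembling these four descriptions column by column produces the production matrix in the form \eqref{4.2-2}.

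The point requiring care, rather than a genuine obstacle, is the parity bookkeeping. Columns $0$ and $2$ of $P$ are supported on even rows (no $t$-factor in $W(t)$ and $Z_2(t)$), while column $1$ is supported on odd rows (hence the factor $t$ in $tZ_1(t)$), and for $k\ge 3$ the factor $t^{k-2}$ in the general column tracks the row at which $a_0$ first appears, with alternating parity matching the alternating parity of the columns $k=3,4,5,\ldots$. Once this shift is respected, the four column generating functions \eqref{4.10}--\eqref{4.13} coincide term-by-term with \eqref{8.0}--\eqref{8.3}, and the theorem follows immediately.
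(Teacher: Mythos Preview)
Your argument is correct, but it follows a genuinely different route from the paper's own proof. You treat Theorem~\ref{thm:4.3-2} as a corollary of Theorem~\ref{thm:new-3.1}: since the recurrences $d_{n,k}=\sum_j p_{j,k}d_{n-2,j}$ for $n\ge 2$ are equivalent to the matrix identity $DP=\overline{\overline{D}}$, and Theorem~\ref{thm:new-3.1} has already identified the coefficients $p_{j,k}$ column by column (via the $W$-, $Z_1$-, $Z_2$-, and $A$-sequences), the column generating functions of $P$ are immediate, and a one-line algebraic rearrangement of \eqref{8.3} with $w_0=b_2/b_0$ yields \eqref{4.10}.

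The paper instead gives an independent derivation that does not invoke Theorem~\ref{thm:new-3.1} at all. It starts from the defining relation $P=(b|g;f_1,f_2)^{-1}\,\overline{\overline{(b|g;f_1,f_2)}}$, writes out the column generating functions of the two-row truncation $\overline{\overline{(b|g;f_1,f_2)}}$ explicitly as $(b-b_0)/t^2,\ (g-g_0)/t,\ gf_1/t,\ gf_1f_2/t,\ldots$, and then applies the inverse formula \eqref{3.7} together with the two cases of the FFTDAR (Theorem~\ref{thm:3.2}) to each column according to its parity. This recovers \eqref{4.10}--\eqref{4.13} directly from the group structure of ${\cD}a{\cR}$.

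What each approach buys: yours is shorter and makes transparent that the production matrix is nothing more than the matrix encoding of the sequence characterization already in hand. The paper's approach is more self-contained---it constitutes a second, independent proof of the closed forms \eqref{4.10}--\eqref{4.13}, and it exhibits concretely how the production matrix arises from the inverse and the fundamental theorem, which is the structural analogue of the classical identity $P=R^{-1}\overline{R}$ in \eqref{1.5}.
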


\begin{proof}
Since 

\[
\overline{\overline{(b|g;f_1,f_2)}}=\left( \frac{b-b_0}{t^2}, \frac{g-g_0}{t}, \frac{gf_1}{t}, \frac{gf_1f_2}{t}, \frac{gf_1(f_1f_2)}{t}, \frac{g(f_1f_2)^2}{t},\ldots\right),
\]
where $\overline{\overline{(b|g;f_1,f_2)}}$ is the truncated $(b|g;f_1,f_2)$ with the first two rows omitted, the production matrix shown in \eqref{4.2-2} is equal to 

\begin{align*}
P=&(b|g;f_1,f_2)^{-1}\overline{\overline{(b|g;f_1,f_2)}}\\
=&\left(c|\frac{1}{g(\overline{f_{12}})};\overline{f_{12}}\frac{t}{f_1(\overline{f_{12}})},\overline{f_{12}}\frac{t}{f_2(\overline{f_{12}})}\right)\\
&\left( \frac{b-b_0}{t^2}, \frac{g-g_0}{t}, \frac{gf_1}{t}, \frac{gf_1f_2}{t}, \frac{gf_1(f_1f_2)}{t}, \frac{g(f_1f_2)^2}{t},\ldots\right),
\end{align*}
where $c$ is given in \eqref{3.7}, i.e., 

\[
c=\frac{1}{b_0}+\frac{f_2(\overline{f_{12}})}{b_0\overline{f_{12}}g(\overline{f_{12}})}(b_0-b(\overline{f_{12}})).
\] 
Noting the generating function $(b-b_0)/t^2$ of the column zero of $\overline{\overline{(b|g;f_1,f_2)}}$ is even, 
we may use the first case of the FFTDAR represented in Theorem \ref{thm:3.2} and the expression of function $c$ to obtain 

\begin{align*}
(b|g;f_1,f_2)^{-1}\frac{b-b_0}{t^2}=&b_2c+\frac{t\frac{1}{g(\overline{f_{12}})}}{\overline{f_{12}}\frac{t}{f_2(\overline{f_{12}})}}\left(\frac{b(\overline{f_{12}})-b_0}{\overline{f_{12}}^2}-b_2\right)\\
=&b_2\left(\frac{1}{b_0}+\frac{f_2(\overline{f_{12}})}{b_0\overline{f_{12}}g(\overline{f_{12}})}(b_0-b(\overline{f_{12}}))\right)\\
&+\frac{f_2(\overline{f_{12}})}{\overline{f_{12}}g(\overline{f_{12}})}\left(\frac{b(\overline{f_{12}})-b_0}{\overline{f_{12}}^2}-b_2\right)\\
=&\frac{b_2}{b_0}+\frac{f_2(\overline{f_{12}})}{b_0\overline{f_{12}}^3g(\overline{f_{12}})}(b(\overline{f_{12}})(b_0-b_2\overline{f_{12}}^2)-b_0^2),
\end{align*}
which gives the generating function of the first column (column zero) of $P$ and the generating function of the $W$-sequence, as shown in \eqref{4.10}.

Noting the generating function $(g-g_0)/t$ of the column one of $\overline{\overline{(b|g;f_1,f_2)}}$ is odd, we may use the second case of the FFTDAR shown in Theorem \ref{thm:3.2} to find 

\begin{align*}
(b|g;f_1,f_2)^{-1}\frac{g-g_0}{t}=&\left(c|\frac{1}{g(\overline{f_{12}})};\overline{f_{12}}\frac{t}{f_1(\overline{f_{12}})},\overline{f_{12}}\frac{t}{f_2(\overline{f_{12}})}\right)\frac{g-g_0}{t}\\
=&\frac{t\frac{1}{g(\overline{f_{12}})}}{\overline{f_{12}}}\frac{g(\overline{f_{12}})-g_0}{\overline{f_{12}}}=\frac{t}{\overline{f_{12}}^2}\left( 1-\frac{g_0}{g(\overline{f_{12}})}\right),
\end{align*}
which gives the generating function of the second column (column one) of $P$ and the generating function of the $Z_1$-sequence, as shown in \eqref{4.11}.

Similarly, since $gf_1/t$ is even, by using the first case of the FFTDAR, we get 

\begin{align*}
(b|g;f_1,f_2)^{-1}\frac{gf_1}{t}=&\left(c|\frac{1}{g(\overline{f_{12}})};\overline{f_{12}}\frac{t}{f_1(\overline{f_{12}})},\overline{f_{12}}\frac{t}{f_2(\overline{f_{12}})}\right)\frac{gf_1}{t}\\
=&g_0f_{1,1}c+\frac{t\frac{1}{g(\overline{f_{12}})}}{\overline{f_{12}}\frac{t}{f_2(\overline{f_{12}})}}\left(\frac{g(\overline{f_{12}})f_1(\overline{f_{12}})}{\overline{f_{12}}}-g_0f_{1,1}\right)\\
=&g_0f_{1,1}\left(\frac{1}{b_0}+\frac{f_2(\overline{f_{12}})}{b_0\overline{f_{12}}g(\overline{f_{12}})}(b_0-b(\overline{f_{12}}))\right)\\
& +\frac{f_2(\overline{f_{12}})}{\overline{f_{12}}g(\overline{f_{12}})}\left(\frac{g(\overline{f_{12}})f_1(\overline{f_{12}})}{\overline{f_{12}}}-g_0f_{1,1}\right)\\
=&\frac{g_0f_{1,1}}{b_0}-\frac{g_0f_{1,1}b(\overline{f_{12}})f_2(\overline{f_{12}})}{b_0\overline{f_{12}}g(\overline{f_{12}})}
+\frac{f_1(\overline{f_{12}})f_2(\overline{f_{12}})}{\overline{f_{12}}^2},
\end{align*}
which gives the generating function of the third column (column two) of $P$ and the generating function of the $Z_2$-sequence, as shown in \eqref{4.12}.

Since $gf_1f_2/t$ is odd, we use the first case of the FFTDAR to calculate 

\begin{align*}
(b|g;f_1,f_2)^{-1}\frac{gf_1f_2}{t}=&\left(c|\frac{1}{g(\overline{f_{12}})};\overline{f_{12}}\frac{t}{f_1(\overline{f_{12}})},\overline{f_{12}}\frac{t}{f_2(\overline{f_{12}})}\right)\frac{gf_1f_2}{t}\\
=&\frac{t\frac{1}{g(\overline{f_{12}})}}{\overline{f_{12}}}\left( \frac{g(\overline{f_{12}})f_1(\overline{f_{12}})f_2(\overline{f_{12}})}{\overline{f_{12}}}\right)=\frac{t^3}{\overline{f_{12}}^2},
\end{align*}
which gives the generating function of the fourth column (column three) of $P$ and $t$ multiples of the generating function of the $A$-sequence as shown in \eqref{4.13}.

In general, we may find the generating function of the column $k-1$, $k\geq 4$, of $P$ is  

\[
\frac{t^{k}}{\overline{f_{12}}^2}=t^{k-2}A(t),
\]
which completes the proof of the theorem.
\end{proof}

An alternative sequence characterization and the corresponding production matrix for double almost-Riordan arrays based on Theorem \ref{thm:0.2} are given in \cite{He24}.

\begin{example}\label{ex:2.1} $\hat R$ shown in \eqref{1.5-4} is the compression of the double Riordan array, the Fibonacci-Stanley array, $(1/(1-t^4)|1/(1-t^2); t, t/(1-t^2))$, in which $(1/(1-t^2);t, t/(1-t^2))$ presents the Fibonacci-Stanley tree. 
Here, $f_1=t$, $f_2=t/(1-t^2)$, $g=1/(1-t^2)$, and $b=1/(1-t^4)$.  Thus, $\sqrt{f_1f_2}=t/\sqrt{1-t^2}$, and the compositional inverse of $\sqrt{f_1f_2}$ is $\overline{f_{12}}=t/\sqrt{1+t^2}$. Substituting $f_1=t$ $f_2=t/(1-t^2)$, $g=1/(1-t^2)$, and $b=1/(1-t^4)$ into Equations \eqref{4.10}-\eqref{4.13} and noting 

\begin{align*}
&f_1(\overline{f_{12}})=\frac{t}{\sqrt{1+t^2}},\\
&f_2(\overline{f_{12}})=t\sqrt{1+t^2},\\
&g(\overline{f_{12}})=1+t^2,\\
&b(\overline{f_{12}})=\frac{(1+t^2)^2}{1+2t^2},
\end{align*}
we immediately have 

\begin{align*}
&A(t)=\frac{t^2}{\overline{f_{12}}^2}=1+t^2,\\
&Z_1(t)=\frac{1}{(t/\sqrt{1+t^2})^2}\left(1-\frac{1}{1+t^2}\right)=1,\\
&Z_2(t)=\frac{t\sqrt{1+t^2}\left( 1+t^2-\frac{(1+t^2)^2}{1+2t^2}\right)}{\frac{t}{\sqrt{1+t^2}}(1+t^2)}+1=\frac{1+3t^2+t^4}{1+2t^2},\\
&\qquad = 1+t^2-t^4+2t^6-4t^8+8t^{10}+\cdots,\\
&W(t)=\frac{t\sqrt{1+t^2}\left(\frac{(1+t^2)^2}{1+2t^2}-1\right)}{\left(\frac{t}{\sqrt{1+t^2}}\right)^3(1+t^2)}+0=\frac{t^2(1+t^2)}{1+2t^2}\\
&\qquad =t^2-t^4+2t^6-4t^8+8t^{10}+\cdots.
\end{align*}
The double almost-Riordan array $(1/(1-t^4)|1/(1-t^2); t, t/(1-t^2))$ begins 

\be
%\begin{bmatrix}
\left[\begin{array}{lllllllllll}
1&0&0&0&0&0&0&0&0&0&...\\
0&1&0&0&0&0&0&0&0&0&...\\
0&0&1&0&0&0&0&0&0&0&...\\
0&1&0&1&0&0&0&0&0&0&...\\
1&0&1&0&1&0&0&0&0&0&...\\
0&1&0&2&0&1&0&0&0&0&...\\
0&0&1&0&2&0&1&0&0&0&...\\
0&1&0&3&0&3&0&1&0&0&..\\
1&0&1&0&3&0&3&0&1&0&...\\
0&1&0&4&0&6&0&4&0&1&...\\
\vdots&\vdots&\vdots&\vdots&\vdots&\vdots&\vdots&\vdots&\vdots&\vdots&\ddots
%\end{bmatrix} 
\end{array}\right]
\ee

Denote 

\begin{align*}
P=&(W(t), tZ_1(t), Z_2(t), tA(t), t^2A(t), \ldots)\\
=&\left[\begin{array}{llllllllllll}
0& 0& 1&0&0&0&0&0&0&0&0&...\\
0& 1& 0&1&0&0&0&0&0&0&0&...\\
1& 0& 1&0&1&0&0&0&0&0&0&...\\
0& 0 &0&1&0&1&0&0&0&0&...\\
-1&0&-1&0&1&0&1&0&0&0&0&...\\
0& 0&0&0&0&1&0&1&0&0&0&...\\
2& 0& 2&0&0&0&1&0&1&0&0&...\\
0&0&  0&0&0&0&0&1&0&1&0&..\\
-4&0&-4&0&0&0&0&0&1&0&1&...\\
0& 0& 0&0&0&0&0&0&0&1&0&...\\
\vdots&\vdots&\vdots&\vdots&\vdots&\vdots&\vdots&\vdots&\vdots&\vdots&\ddots
\end{array}\right].
\end{align*}

We find 

\begin{align*}
(b|g;f_1.f_2)P=
&\overline{\overline{\left( \frac{1}{1-t^4}|\frac{1}{1-t^2}; t,\frac{t}{1-t^2}\right)}},
\end{align*}
where the rightmost matrix is the truncation of 

\[
(b|g;f_1,f_2)=\left( \frac{1}{1-t^4}|\frac{1}{1-t^2}; t, \frac{t}{1-t^2}\right)
\]
with the first row and the second element of column zero omitted. 
\end{example}

\section{Compressions of double almost-Riordan arrays} 

Let $(b|g;f_1,f_2)=(d_{n,k})_{n\geq k\geq 0}\in {\cal D}a{\cal R}$. We define its compression $(\hat d_{n,k})_{n\geq k\geq 0}$ as follows:

\be\label{9.1} 
\hat d_{n,k}:=d_{2n-k,k}, \quad n\geq k\geq 0. 
\ee
We now study the structure of the compression of a Riordan array starting from the following theorem. 

\begin{theorem}\label{thm:9.2}
Let $(b|g;f_1,f_2)=(d_{n,k})_{n\geq k\geq 0}$ be a double almost-Riordan array with 

\begin{align*}
& b(t)=\sum_{k\geq 0} b_{2k}t^{2k}, \,\, g(t)=\sum_{k\geq 0} g_{2k}t^{2k},\,\,  f_1(t)=\sum_{k\geq 0}f_{1,2k+1}t^{2k+1}, \,\, \mbox{and} \nonumber\\ 
&  f_2(t)=\sum_{k\geq 0} f_{2,2k+1}t^{2k+1},
\end{align*}
and let its compression array $(\hat d_{n,k})_{n,k\geq 0}$ be defined by \eqref{9.1}. Then we have  

\begin{align}\label{9.1-2} 
&\hat d_{n,0}=[t^n] \hat b(t),\nonumber \\
&\hat d_{n,k}=\begin{cases} [t^n]t\hat g (\hat f_1\hat f_2)^{(k-1)/2}, & \mbox{if $k$ is odd},\\
[t^n]t\hat g \hat f_1(\hat f_1\hat f_2)^{(k-2)/2}, &\mbox{if $k$ is even}
\end{cases}
\end{align}
for $k\geq 1$, where 
\begin{align}\label{9.1-3}
&\hat b(t)=\sum_{k\geq 0} b_{2k}t^k, \,\, \hat g(t)=\sum_{k\geq 0} g_{2k}t^k,\,\, \hat f_1(t)=\sum_{k\geq 0}f_{1,2k+1}t^{k+1}, \,\, \mbox{and} \nonumber\\ 
& \hat f_2(t)=\sum_{k\geq 0} f_{2,2k+1}t^{k+1}.
\end{align}
\end{theorem}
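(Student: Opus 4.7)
The plan is to compute the generating function of each column of the compressed array directly from the defining relation $\hat d_{n,k}=d_{2n-k,k}$. The key observation is that because $b,g$ are even and $f_1,f_2$ are odd, every column generating function $H_k(t)$ of $(b|g;f_1,f_2)$ is either purely even (when $k$ is even) or purely odd (when $k$ is odd); equivalently, $d_{n,k}=0$ unless $n\equiv k\pmod 2$, so the index $2n-k$ always hits an entry of matching parity and the shift is consistent. In addition, I would record at the outset the elementary identities
\[
\hat g(t)=g(\sqrt t),\qquad \hat f_i(t)=\sqrt t\, f_i(\sqrt t)\quad(i=1,2),
\]
which follow immediately from the definition \eqref{9.1-3} and the parities of $g,f_1,f_2$, and which will do all the work at the end.

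First I would handle $k=0$: since $b$ is even, $\sum_{n\geq 0}\hat d_{n,0}t^n=\sum_{n\geq 0}d_{2n,0}t^n=\sum_{n\geq 0}b_{2n}t^n=\hat b(t)$, giving the first assertion. For $k\geq 1$ I would split by the parity of $k$. When $k=2\ell+1$ is odd, the column generating function $H_{2\ell+1}(t)=tg(f_1f_2)^\ell$ is odd; writing $H_{2\ell+1}(t)=\sum_{j\geq\ell}c_j t^{2j+1}$ and setting $2n-k=2j+1$, an index chase yields
\[
\sum_{n}d_{2n-k,k}\,t^n=t^{\ell+1}\,\frac{H_{2\ell+1}(\sqrt t)}{\sqrt t}.
\]
When $k=2\ell$ is even with $\ell\geq 1$, the column generating function $H_{2\ell}(t)=tgf_1(f_1f_2)^{\ell-1}$ is even, and the analogous computation with $2n-k=2j$ gives
\[
\sum_{n}d_{2n-k,k}\,t^n=t^{\ell}\,H_{2\ell}(\sqrt t).
\]

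The last step is to convert these two displays into the claimed form. The identities above yield $\hat f_1(t)\hat f_2(t)=t\,f_1(\sqrt t)f_2(\sqrt t)$, so $(f_1f_2)(\sqrt t)^{m}=(\hat f_1\hat f_2)^{m}/t^{m}$ for every $m\geq 0$. Substituting this, together with $g(\sqrt t)=\hat g(t)$ and $\sqrt t\,f_1(\sqrt t)=\hat f_1(t)$, into the two displays, the stray powers of $t$ and $\sqrt t$ telescope and produce exactly $t\hat g(\hat f_1\hat f_2)^{(k-1)/2}$ for odd $k$ and $t\hat g\hat f_1(\hat f_1\hat f_2)^{(k-2)/2}$ for even $k\geq 2$, which is the second assertion. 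The argument is essentially bookkeeping and I do not anticipate a serious obstacle; the only point to be careful about is tracking the half-powers of $t$ that arise because $f_1,f_2$ are odd rather than even, and separating out the small cases $k=0,1,2$ so that the exponents $(k-1)/2$ and $(k-2)/2$ are unambiguously non-negative integers.
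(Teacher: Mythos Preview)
Your proposal is correct and follows essentially the same approach as the paper: both arguments use the key identities $\hat g(t)=g(\sqrt t)$ and $\hat f_i(t)=\sqrt t\,f_i(\sqrt t)$ (equivalently, the substitution $t^2\mapsto t$) and split into the cases $k=0$, $k$ odd, $k$ even. The only cosmetic difference is that the paper works coefficient-by-coefficient with $[t^{2n}]$ while you package the same computation at the level of the column generating functions $H_k$; the algebra is otherwise identical.
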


\begin{proof} From \eqref{9.1}, 

\[
\hat d_{n,0}=d_{2n,0}=[t^{2n}]b(t)=[t^{2n}]\sum_{k\geq 0} b_{2k} t^{2k}=[t^n] \sum_{k\geq 0} b_{2k} t^k=[t^n] \hat b(t).
\]
For $k\geq 1$, we consider two cases of that $k$ is odd or $k$ is even, respectively. If $k=2\ell+1$, using \eqref{9.1} yields. 

\begin{align*}
\hat d_{n,2\ell+1}=& d_{2n-2\ell-1,2\ell +1}=[t^{2n-2\ell-1}]tg(f_1f_2)^{\ell}\\
=&[t^{2n}]t^{2\ell+2}g(f_1f_2)^\ell=[t^{2n}]t^2g(tf_1tf_2)^\ell.
\end{align*}
On the rightmost side, using the transformation $t^2\to t$ and noting $g(t^{1/2})=\hat g$ and $t^{1/2}f_i(t^{1/2})=\hat f_i$ 
($i=1,2$), we obtain 

\[
\hat d_{2n,2\ell+1}=[t^n]t\hat g(t)(\hat f_1\hat f_2)^\ell.
\]
Similarly, if $k=2\ell$, equation \eqref{9.1} shows 

\begin{align*}
\hat d_{n,2\ell }=&d_{2n-2\ell,2\ell}=[t^{2n-2\ell}]tgf_1(f_1f_2)^{\ell-1}=[t^{2n}]t^{2\ell+1}g(f_1f_2)^{\ell-1}\\
=&[t^{2n}]t^2g tf_1(tf_1 tf_2)^{\ell-1}.
\end{align*}
On the rightmost side, using the transformation $t^2\to t$ and noting $g(t^{1/2})=\hat g$ and $t^{1/2}f_i(t^{1/2})=\hat f_i$ 
($i=1,2$), we may write the above equation to 

\[
\hat d_{n,2\ell}=[t^n]t\hat g \hat f_1(\hat f_1\hat f_2)^{\ell-1},
\]
completing the proof.
\end{proof}

As an example, $\hat R=(1/(1-t), t, t/(1-t))$ shown in \eqref{1.5-4} is the compression of the double Riordan array $(1/(1-t^2), t, t/(1-t^2))$, and $(1/(1-t^2)|1/(1-t), t, t/(1-t))$ is the compression of the double almost-Riordan array shown in Example \ref{ex:2.1}.

The sequence characterization of the compression of a double Riordan array is given in the following theorem. 

\begin{theorem}\label{thm:9.3}
Let $(b|g;f_1,f_2)=(d_{n,k})_{n\geq k\geq 0}$ be a double Riordan array, and let its compression 
be defined by $(\hat d_{n,k})_{n\geq k\geq 0}$, where $\hat d_{n,k}$ is shown in \eqref{9.1}. 
Suppose the $A$-, $Z_1$-, $Z_2$-, and $W$-sequences of $(b|g;f_1,f_2)$ are 

\begin{align*}
&A=\{ a_0, a_1,\ldots\}, \,\,Z_1=\{ z_{1,0}, z_{1,1}, \ldots\}, \\
&Z_2=\{ z_{2,0}, z_{2,1}, \ldots\}, \,\, 
 \mbox{and}\,\, W=\{w_0,w_1,\ldots\}
\end{align*}
with their generating functions $A(t)=\sum_{k\geq 0}a_kt^{2k}$, $Z_1(t)=\sum_{k\geq 0} z_{1,k} t^{2k}$, $Z_2(t)=\sum_{k\geq 0} z_{2,k} t^{2k}$ and $W(t)=\sum_{k\geq 0} w_k t^{2k}$, respectively. Then for $k=1,2,\ldots$ and $n=1,2,\ldots$, 

\begin{align}\label{9.2}
&\hat d_{n,k}=a_{0}\hat d_{n-2,k-2}+a_{1}\hat d_{n-1,k}+a_{2}\hat d_{n,k+2}+\cdots=\sum_{j\geq 0} 
a_j\hat d_{n+j-2, k+2(j-1)},\quad k\geq 3\\
&\hat d_{n,2}=z_{2,0}\hat d_{n-2,0}+z_{2,1}\hat d_{n-1,2}+z_{2,2}\hat d_{n,4}+\cdots=\sum_{j\geq 0} z_{2,j}\hat d_{n+j-2, 2j},
\label{9.3}\\
&\hat d_{n,1}=z_{1,0}\hat d_{n-1,1}+z_{1,1}\hat d_{n,3}+z_{1,2}\hat d_{n+1,5}+\cdots=\sum_{j\geq 0} z_{1,j}\hat d_{n+j-1,2j+1},\label{9.4}\\
&\hat d_{n,0}=w_0\hat d_{n-1,0}+w_1\hat d_{n,2}+w_2\hat d_{n+1,4}+\cdots=\sum_{j\geq 0} w_j\hat d_{n+j-1, 2j}, \label{9.5},
\end{align}

or equivalently,

\begin{align}\label{9.2-2}
&\hat f_1\hat f_2=t^2A\left( \sqrt{\frac{\hat f_1\hat f_2}{t}}\right),\\
&Z_2\left(\sqrt{ \frac{\hat f_1\hat f_2}{t}}\right)=\frac{\hat f_2}{t^2\hat g}(\hat g\hat f_1-z_{2,0}t\hat b)+z_{2,0},\quad z_{2,0}=\frac{g_0f_{11}}{b_0},\label{9.3-2}\\
&Z_1\left(\sqrt{\frac{\hat f_1\hat f_2}{t}}\right)=\frac{1}{t}\left(1-\frac{g_0}{\hat g}\right), \label{9.4-2}\\
&W\left(\sqrt{\frac{\hat f_1\hat f_2}{t}}\right)=\frac{\hat f_2}{t^2\hat g}\left(\hat b(1-w_0t)-b_0\right)+w_0. \quad w_0=\frac{b_2}{b_0},\label{9.5-2}
\end{align}
\end{theorem}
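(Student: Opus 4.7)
I will prove both parts of the theorem by (i) re-indexing the sequence characterization of $(b|g;f_1,f_2)$ given in Theorem \ref{thm:new-3.1} under the compression map $\hat d_{n,k}=d_{2n-k,k}$, and (ii) translating the resulting recurrences into generating-function identities via the column formulas of Theorem \ref{thm:9.2}. For step (i), the original array satisfies $d_{n,k}=\sum_{j\ge 0}a_j d_{n-2,k+2(j-1)}$ for $k\ge 3$, together with the three boundary recurrences expressing $d_{n,0}$, $d_{n,1}$, $d_{n,2}$ via the $W$-, $Z_1$-, $Z_2$-sequences. Setting $m=(n+k)/2$ so that $d_{n,k}=\hat d_{m,k}$, one has $d_{n-2,\,k+2(j-1)}=\hat d_{m+j-2,\,k+2(j-1)}$ because $((n-2)+k+2(j-1))/2=m+j-2$, which converts the generic recurrence directly into \eqref{9.2}. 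Analogous substitutions at $k=2,1,0$ yield \eqref{9.3}, \eqref{9.4}, and \eqref{9.5} respectively.

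For step (ii), let $F_k(t)=\sum_n\hat d_{n,k}t^n$ denote the column-$k$ generating function of the compressed array; by Theorem \ref{thm:9.2}, $F_0=\hat b$, $F_{2\ell+1}=t\hat g(\hat f_1\hat f_2)^\ell$, and $F_{2\ell}=t\hat g\hat f_1(\hat f_1\hat f_2)^{\ell-1}$ for $\ell\ge 1$. Multiplying each recurrence by $t^n$ and summing over the admissible range of $n$, the shift rule $\sum_n\hat d_{n+j-c,\,k'}t^n=t^{c-j}F_{k'}(t)$ applies as a formal power-series identity (since $\hat d_{m,k'}=0$ for $m<k'$). Introducing the substitution $u=\sqrt{\hat f_1\hat f_2/t}$, so that $\hat f_1\hat f_2=tu^2$ and $t^{c-j}(\hat f_1\hat f_2)^{j-1}=t^{c-1}u^{2(j-1)}$, the $A$-recurrence (say for odd $k=2\ell+1$ with $\ell\ge 1$), after division by $t\hat g(\hat f_1\hat f_2)^\ell$, collapses to $1=tu^{-2}A(u)$, equivalently $u^2=tA(u)$, which is \eqref{9.2-2}; the even case $k=2\ell$ with $\ell\ge 2$ gives the same identity, confirming consistency. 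The same recipe applied to the $Z_1$-, $Z_2$-, $W$-recurrences produces \eqref{9.4-2}, \eqref{9.3-2}, and \eqref{9.5-2} respectively, after separating the $j=0$ term of each sum and using $u^{-2}(X(u)-x_0)=\sum_{j\ge 1}x_ju^{2(j-1)}$ with $X\in\{Z_1,Z_2,W\}$.

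The main technical difficulty will be the careful bookkeeping of boundary and initial-value terms. The $W$-recurrence applies only for $n\ge 1$, so summation gives $\hat b-b_0$ on the left rather than $\hat b$; this discrepancy is precisely the source of the $-b_0$ correction in \eqref{9.5-2}. The $1-g_0/\hat g$ factor in \eqref{9.4-2} arises analogously from the $n=1$ boundary of the $Z_1$-recurrence, where the contribution $g_0 t$ of $F_1$ must be subtracted before the remaining series can be recognized. The $z_{2,0}t\hat b$ term in \eqref{9.3-2} emerges when the $j=0$ contribution of the $Z_2$-sum (which is the only one involving $F_0=\hat b$ rather than $F_{2j}=t\hat g\hat f_1(\hat f_1\hat f_2)^{j-1}$) is separated out and grouped with the rest. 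Once these corrections are correctly assembled, the remaining verifications reduce to substitution in $u$ and division by the common factor $t\hat g\hat f_1$ or $t\hat g$, which is routine.
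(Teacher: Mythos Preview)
Your proposal is correct and follows essentially the same approach as the paper's proof: both parts proceed by re-indexing the original $A$-, $Z_1$-, $Z_2$-, $W$-recurrences of Theorem~\ref{thm:new-3.1} under $\hat d_{n,k}=d_{2n-k,k}$ (the paper writes $2(n-1)-k=2(n+j-2)-(k+2(j-1))$ directly, you factor through $m=(n+k)/2$, which is the same computation), and then read off the generating-function identities by inserting the column formulas of Theorem~\ref{thm:9.2} and separating the $j=0$ boundary contribution. Your systematic use of the substitution $u=\sqrt{\hat f_1\hat f_2/t}$ and the shift rule $\sum_n\hat d_{n+j-c,k'}t^n=t^{c-j}F_{k'}$ is a slightly cleaner packaging of the same coefficient manipulations the paper carries out term by term, but there is no substantive difference in the argument.
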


\begin{proof}
To find the sequence characterization of the compression of a double Riordan array, we consider the cases of $\hat d_{n,2l-1}$ and $\hat d_{n,2l}$ for $l=1,2,\ldots$ as well as $\hat d_{n,1}$ and $\hat d_{n,0}$ for $n\geq 1$. From \eqref{9.1} and \eqref{8.0-0}, we have 

\begin{align}\label{9.0}
\hat d_{n,k}=&d_{2n-k,k}=\sum_{j\geq 0} a_jd_{2(n-1)-k,k+2(j-1)}\nonumber\\
=&\sum_{j\geq 0}a_j d_{2(n+j-2)-k-2(j-1), k+2(j-1)}\nonumber\\
=&\sum_{j\geq 0} a_j\hat d_{n+j-2, k+2(j-1)}.
\end{align}
Thus, for $k=2\ell-1$, $\ell\geq 2$, we have 

\[
[t^n]t\hat g(\hat f_1\hat f_2)^{\ell-1}=\hat d_{n,2\ell-1}=\sum_{j\geq 0} a_j\hat d_{n+j-2, 2(\ell+j-2)+1}=\sum_{j\geq 0} a_j [t^{n+j-2}]t\hat g(\hat f_1\hat f_2)^{\ell+j-2},
\]
which proves \eqref{9.2} for $k=2\ell-1$, $\ell\geq 2$, and 

\[
t\hat g(\hat f_1\hat f_2)^{\ell-1}=t\hat g(\hat f_1\hat f_2)^{\ell-2}t^2\sum_{j\geq 0} a_j \left(\frac{\hat f_1\hat f_2}{t}\right)^j,
\]
Hence, we obtain \eqref{9.2-2} for $k=2\ell -1$, $\ell\geq 2$.

Similarly, for $k=2\ell$, $\ell\geq 2$, we have 

\[
[t^n]t\hat g\hat f_1(\hat f_1\hat f_2)^{\ell-1}=\hat d_{n,2\ell}=\sum_{j\geq 0} a_j\hat d_{n+j-2, 2(\ell+j-1)}=\sum_{j\geq 0} a_j [t^{n+j-2}]t\hat g\hat f_1(\hat f_1\hat f_2)^{\ell+j-2},
\]
which gives \eqref{9.2} for $k=2\ell$, $\ell\geq 2$, and 

\[
t\hat g\hat f_1(\hat f_1\hat f_2)^{\ell-1}=t\hat g\hat f_1(\hat f_1\hat f_2)^{\ell-2}t^2\sum_{j\geq 0} a_j \left(\frac{\hat f_1\hat f_2}{t}\right)^j,
\]
Hence, we obtain \eqref{9.2-2} for $k=2\ell$, $\ell\geq 2$. We have proved \eqref{9.2} and \eqref{9.2-2} for all 
$k\geq 1$. 

Considering the entries in column $2$, we have 

\begin{align*}
\hat d_{n,2}=&d_{2n-2,2}=z_{2,0}d_{2n-4, 0}+z_{2,1}d_{2n-4,2}+z_{2,2}d_{2n-4,4}+\ldots\\
=& z_{2,0}\hat d_{n-2, 0}+z_{2,1}\hat d_{n-1,2}+z_{2,2}\hat d_{n,4}+\ldots,
\end{align*}
namely, \eqref{9.3}. Thus 

\[
[t^n]t\hat g\hat f_1=[t^{n-2}]z_{2,0} b+[t^{n-2+j}]t\hat g\hat f_1\sum_{j\geq 1} z_{2,j}(\hat f_1\hat f_2)^{j-1},
\]
or equivalently,

\[
t\hat g\hat f_1=z_{2,0}t^2\hat b+\frac{t^3\hat g}{f_2}\left( Z_2\left(\sqrt{\frac{\hat f_1\hat f_2}{t}}\right) -z_{2,0}\right),
\]
which gives \eqref{9.3-2}.

Similarly, for the entries of column $1$ 

\begin{align*} 
\hat d_{n,1}=&d_{2n-1,1}=z_{1,0} d_{2n-3,1}+z_{1,1}d_{2n-3,3}+z_{1,2}d_{2n-3,5}+\ldots\\
=&z_{1,0}\hat d_{n-1,1}+z_{1,1}\hat d_{n, 3}+z_{1,2}\hat d_{n+1,5}+\ldots,
\end{align*}
which gives \eqref{9.4} and implies 

\[
[t^n]t\hat g=[t^{n-1+j}]t \hat g\sum_{j\geq 0}z_{1,j}(\hat f_1\hat f_2)^j),
\]
or equivalently,

\[
t(\hat g-g_0)=t^2\hat gZ_1\left( \sqrt{\frac {\hat f_1\hat f_2}{t}}\right).
\]
From the last equation, we find \eqref{9.4-2}.

Finally, we consider entries of column $0$ as

\begin{align*}
\hat d_{n,0}=&d_{2n,0}=w_0d_{2n-2,0}+w_1d_{2n-2,2}+w_2d_{2n-2,4}+\ldots\\
=& w_0\hat d_{n-1}+w_1\hat d_{n,2}+w_2\hat d_{n+1, 4}+\ldots,
\end{align*}
which gives \eqref{9.5} and 

\[
[t^n]\hat b=[t^{n-1}]w_0\hat b+[t^{n-1+j}]\sum_{j\geq 1}w_{j}t\hat g \hat f_1(\hat f_1\hat f_2)^{j-1}.
\]
Hence, 

\[
\hat b-b_0=w_0t \hat b+\frac{t^2\hat g}{\hat f_2}\left( W\left(\sqrt{\frac{\hat f_1\hat f_2}{t}}\right)-w_0\right),
\]
where $w_0=b_2/b_0$, which implies \eqref{9.5-2}. 
\end{proof}

\begin{remark}\label{rem:4.1}
It can be seen that the compositional inverse of $\sqrt{(\hat f_1\hat f_2)/t}$ is

\[
\overline {\sqrt{\frac{\hat f_1 \hat f_2}{t}}}=\overline{f_{1,2}}^2,
\]
where $\overline{f_{12}}$ is the compositional inverse of $\sqrt{f_1f_2}$. Hence, \eqref{9.2-2}-\eqref{9.5-2} are equivalently, respectively, \eqref{4.10}-\eqref{4.13}. For instance, substituting $t=\overline{f_{12}}^2$ into the expressions involving $A$ and $Z_1$, we obtain 

\begin{align*}
&A(t)=\frac{t^2}{\overline{f_{12}}^2},\\
&Z_1(t)=\frac{1}{\overline{f_{12}}^2}\left( 1-\frac{g_0}{g(\overline{f_{12}})}\right),
\end{align*}
respectively. To transform $Z_2$ in \eqref{9.3-2}, we use $t=\overline{f_{12}}^2$ and 

\be\label{0-0}
\overline {f_{12}}f_i(\overline {f_{12}})=\hat f_i(\overline{f_{12}}^2),\quad i=1,2,
\ee
into \eqref{9.3-2} and notice $f_1(\overline{f_{12}})f_2(\overline{f_{12}})=t^2$ to obtain

\begin{align*}
Z_2(t)=&\frac{\overline{f_{12}}f_2(\overline{f_{12}})}{\overline{f_{12}}^4g(\overline{f_{12}})}\left(\overline{f_{12}}f_1(\overline{f_{12}})g(\overline{f_{12}})-\frac{g_0f_{11}}{b_0}\overline{f_{12}}^2b(\overline{f_{12}})\right)+\frac{g_0f_{11}}{b_0}\\
=&\frac{g_0f_{11}}{b_0}+\frac{t^2}{\overline {f_{12}}^2}-\frac{g_0f_{11}b(\overline{f_{12}})f_2(\overline{f_{12}})}{b_0\overline{f_{12}}g(\overline{f_{12}})}.
\end{align*}
Hence, we get \eqref{4.11} from \eqref{9.3-2}.

Finally, by applying \eqref{0-0} and substituting $t=\overline{f_{12}}^2$ into \eqref{9.4-2}, we have 

\[
W(t)=\frac{\overline{f_{12}}f_2(\overline{f_{12}})}{\overline{f_{12}}^4g(\overline{f_{12}})}((1-w_0\overline{f_{12}}^2)b(\overline{f_{12}})-b_0)+w_0,\quad w_0=\frac{b_2}{b_0}, 
\]
which implies \eqref{4.13}.
\end{remark}

Equation  \eqref{9.2} can be presented 

\bns
&&[t^{n}]t\hat g \hat f_{1}(\hat f_{1}\hat f_{2})^{k -1}=\sum_{i\geq 0}a_{1,i}[t^{n-i-1}]t\hat g(\hat f_{1}\hat f_{2})^{k-1+i}\\
&=&[t^{n}]t\hat g(\hat f_{1}\hat f_{2})^{k -1}\sum_{i\geq 0}a_{1,i}t^{i+1}(\hat f_{1}\hat f_{2})^{i}.
\ens
The above expression has the following equivalent form with the using of the generating function of the columns of the compression matrix $(\hat d_{n,k})_{n,k\geq 0}$:

\[
\hat f_{1}=tA\left( t \hat f_{1}\hat f_{2}\right).
\]

In Example \ref{ex:2.1}, $\hat R$ shown in \eqref{1.5-4} is the compression of the double Riordan array, the Fibonacci-Stanley array, $(1/(1-t^4)|1/(1-t^2); t, t/(1-t^2))$, in which $(1/(1-t^2);t, t/(1-t^2))$ presents the Fibonacci-Stanley tree, where $(1/(1-t^2);t, t/(1-t^2))$ begins 

\[
\hat R=(\hat d_{n,k})_{n,k\geq 0}= 
\begin{bmatrix}
1&0&0&0&0&0&0&0&0&...\\
0&1&0&0&0&0&0&0&0&...\\
1&1&1&0&0&0&0&0&0&...\\
0&1&1&1&0&0&0&0&0&...\\
1&1&1&2&1&0&0&0&0&...\\
0&1&1&3&2&1&0&0&0&...\\
1&1&1&4&3&3&1&0&0&...\\
 \vdots&\vdots&\vdots&\vdots&\vdots&\vdots&\vdots&\vdots&\vdots&\ddots\\
\end{bmatrix} 
\]
Since the $A$-, $Z_1$-, $Z_2$-, and $W$- sequences of $(1/(1-t^4)|1/(1-t^2); t, t/(1-t^2))$ are 
\begin{align*}
&A(t)=\frac{t^2}{\overline{f_{12}}^2}=1+t^2,\\
&Z_1(t)=\frac{1}{(t/\sqrt{1+t^2})^2}\left(1-\frac{1}{1+t^2}\right)=1,\\
&Z_2(t)=\frac{t\sqrt{1+t^2}\left( 1+t^2-\frac{(1+t^2)^2}{1+2t^2}\right)}{\frac{t}{\sqrt{1+t^2}}(1+t^2)}+1=\frac{1+3t^2+t^4}{1+2t^2},\\
&\qquad = 1+t^2-t^4+2t^6-4t^8+8t^{10}+\cdots,\\
&W(t)=\frac{t\sqrt{1+t^2}\left(\frac{(1+t^2)^2}{1+2t^2}-1\right)}{\left(\frac{t}{\sqrt{1+t^2}}\right)^3(1+t^2)}+0=\frac{t^2(1+t^2)}{1+2t^2},\\
&\qquad =t^2-t^4+2t^6-4t^8+8t^{10}+\cdots,
\end{align*}
we have  $\hat d_{n,k}=\hat d_{n-2, k-2}+\hat d_{n-1,k}$, $\hat d_{n,1}=\hat d_{n-1,1}$, and 

\begin{align*}
&\hat d_{n,2}=\hat d_{n-2,0}+\hat d_{n-1,2}-\hat d_{n,4}+\cdots\\
&\hat d_{n,0}=\hat d_{n,2}-\hat d_{n+1,4}+\cdots.
\end{align*}

In the following, we call the compression of a double Riordan array and the compression of a double almost-Riordan array the compressed double Riordan array and the compressed double almost-Riordan array, respectively.

\section{Total positivity of compression double almost-Riordan arrays}

Following Karlin \cite{Kar} and Pinkus \cite{Pin}, an infinite matrix is called totally positive (abbreviate, TP), if its minors of all orders are nonnegative. An infinite nonnegative sequence $(a_n)_{n\geq 0}$ is called a P\'olya frequency sequence (abbreviate, PF), if its Toeplitz matrix

\[
\left[a_{i-j}\right]_{i,j\geq 0}=\left[ \begin{array} {lllll} a_0 & & & &  \\
a_1& a_0 & & & \\ a_2 &a_1& a_0& &  \\ a_3& a_2 & a_1& a_0 & \\
\vdots& \vdots &\vdots&\vdots & \ddots \end{array}\right]
\]
is TP. We say that a finite sequence $(a_0, a_1,\ldots, a_n)$ is PF if the corresponding infinite sequence $(a_0, a_1, \ldots, a_n, 0, \ldots)$ is PF. Denote by ${\bN}$ the set of all nonnegative integers. A fundamental characterization for PF sequences is given by Schoenberg et al.\cite{AESW, ASW, Kar}, which states that a sequence $(a_n)_{n\geq 0}$ is PF if and only if its generating function can be written as 

\begin{equation}\label{0}
\sum_{n\geq 0} a_n t^n=C t^ke^{\gamma t}\frac{\Pi_{j\geq 0} (1+\alpha_j t)}{\Pi_{j\geq 0} (1-\beta_j t)},
\end{equation}
where $C>0$, $k\in {\bN}$, $\alpha_j$, $\beta_j$, $\gamma \geq 0$, and $\sum_{j\geq 0}(\alpha_j+\beta_j)<\infty$. In this case, the above generating function is called a P\'olya frequency formal power series. For some relevant results, see, for example, Brenti \cite{Bre} and Pinkus \cite{Pin}. 

Let $(b|g;f_1,f_2)=(d_{n,k})_{n\geq k\geq 0}$ be a double almost-Riordan array with 

\begin{align*}
& b(t)=\sum_{k\geq 0} b_{2k}t^{2k}, \,\, g(t)=\sum_{k\geq 0} g_{2k}t^{2k},\,\,  f_1(t)=\sum_{k\geq 0}f_{1,2k+1}t^{2k+1}, \,\, \mbox{and} \nonumber\\ 
&  f_2(t)=\sum_{k\geq 0} f_{2,2k+1}t^{2k+1},
\end{align*}
and let its compression array $(\hat d_{n,k})_{n,k\geq 0}$ be defined by \eqref{9.1}, namely, 
$(\hat d_{n,k})_{n,k\geq 0}=(\hat b|\hat g; \hat f_1, \hat f_2)$ with $\hat b$, $\hat g$, $\hat f_1$, and $\hat f_2$ shown in \eqref{9.1-3}. 

It is clear that $(\hat b|\,\hat g;\hat f_1,\hat f_2)$ can be written as 
%\in{\cD}a{\cR}$ can be written as 
\be\label{1.14-2}
(\hat b|\,\hat g;\hat f_1,\hat f_2)=\left( \begin{matrix} \hat b(0) & 0\\ (\hat b-\hat b(0))/t & (\hat g;\hat f_1,\hat f_2) \end{matrix}\right),
\ee
where $(\hat g;\hat f_1,\hat f_2)=(\hat g,\hat g\hat f_1, \hat g\hat f_1\hat f_2,\ldots)$ is a commpression double Riordan array.
%\in{\cD}{\cR}$ is a double Riordan array. 

Let $A$ and $B$ be $m\times m$ and $n\times n$ matrices, respectively. Then we define the direct sum of $A$ and $B$ by

\be\label{2.5-0} 
A\oplus B =\left[ \begin{matrix} A &0 \\ 0 &B\end{matrix}\right]_{(m+n)\times(m+n)}.
\ee

\subsection{Total positivity of the compressions of double Riordan arrays}

We first study the total positivity of the compressions of double Riordan arrays. Then, by using the total positivity of the compressions of double Riordan arrays to study the total positivity of the compressions of double almost-Riordan arrays.

\begin{theorem}\label{thm:6.1} 
Let $(\hat g;\hat f_1,\hat f_2)$ be the compression of a double Riordan array. If $\hat g$, $\hat f_1$, and $\hat f_2$ are P\'olya frequency formal power series, then $(\hat g;\hat f_1,\hat f_2)$ is totally positive.
\end{theorem}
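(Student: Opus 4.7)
The plan is to establish the Toeplitz factorization
\[
(\hat g;\hat f_1,\hat f_2)\;=\;T(\hat g)\cdot(1;\hat f_1,\hat f_2),
\]
where $T(\hat g)$ is the lower-triangular Toeplitz matrix whose $(i,j)$-entry is $[t^{i-j}]\hat g$, and then to treat each factor separately. The identity is immediate because left multiplication by $T(\hat g)$ transforms a column with generating function $\varphi$ into one with generating function $\hat g\,\varphi$, mapping the column generating functions $1,\hat f_1,\hat f_1\hat f_2,\hat f_1^2\hat f_2,\ldots$ of $(1;\hat f_1,\hat f_2)$ to those of $(\hat g;\hat f_1,\hat f_2)$. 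Since the product of two TP matrices is TP by the Cauchy-Binet formula, the statement reduces to showing that both factors are TP. By the Aissen-Edrei-Schoenberg-Whitney theorem (the forward implication of Schoenberg's characterization \eqref{0}), $T(\hat g)$ is TP as soon as $\hat g$ is PF, so only $(1;\hat f_1,\hat f_2)$ remains.

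For $(1;\hat f_1,\hat f_2)$, I would write $\hat f_i=t\phi_i$ with $\phi_i\in\F_0$ PF (which forces $\phi_i(0)>0$), apply Schoenberg's expansion
\[
\phi_i(t)\;=\;C_i\,e^{\gamma_i t}\prod_{j\geq 0}\frac{1+\alpha_{ij}t}{1-\beta_{ij}t},
\]
and reduce to the polynomial case via truncation: any fixed $n\times n$ leading submatrix of $(1;\hat f_1,\hat f_2)$ depends only on the degree-$(n-1)$ Taylor polynomials of $\hat f_1,\hat f_2$, and truncating each $\phi_i$ to a PF polynomial whose coefficients agree with those of $\phi_i$ up to degree $n-1$ preserves the leading submatrix; the class of TP matrices is closed under such entrywise approximation, and the exponential atom is absorbed by polynomial approximants such as $(1+\gamma t/n)^n$. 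It then suffices to prove total positivity of $(1;\hat f_1,\hat f_2)$ when each $\phi_i$ is a finite PF product, which I would establish by induction on the total number of Schoenberg atoms. The base case $\phi_1=\phi_2=1$ gives $(1;t,t)=I$, trivially TP; the inductive step consists in inserting one further atom of one of the types $(1+\alpha t)$, $(1-\beta t)^{-1}$, or $e^{\gamma t}$ into either $\phi_1$ or $\phi_2$ and verifying that total positivity is preserved.

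The inductive step is the main obstacle. In the classical Riordan setting, inserting a PF atom into $f$ is realised by right-multiplication by a single bidiagonal TP matrix acting uniformly on every column; here, however, the alternation of $\hat f_1$ and $\hat f_2$ across even and odd columns rules out any such uniform right-multiplier, since a naive attempt produces matrices with negative entries: an atom in $\phi_1$ must affect only the odd columns (which carry an extra power of $\hat f_1$) and leave the even ones alone, and conversely for $\phi_2$. The crux of the proof will therefore be to construct, for each elementary PF atom and each of $\phi_1,\phi_2$, an explicit block-bidiagonal TP matrix that performs the corresponding selective insertion, or alternatively to replace the inductive step by a direct Lindström-Gessel-Viennot argument on a planar DAG whose non-crossing weighted paths encode the alternating column structure of $(1;\hat f_1,\hat f_2)$.
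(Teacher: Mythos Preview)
Your reduction to the Toeplitz-times-Lagrange factorization $(\hat g;\hat f_1,\hat f_2)=T(\hat g)\cdot(1;\hat f_1,\hat f_2)$ is exactly what the paper does, and the treatment of $T(\hat g)$ is identical. The divergence is in how you attack $(1;\hat f_1,\hat f_2)$, and here your proposal has a real gap: you correctly identify that inserting a Schoenberg atom into only one of $\phi_1,\phi_2$ cannot be realised by a single uniform bidiagonal right-multiplier, but you do not actually build the selective block-bidiagonal TP matrix or the LGV network. Neither construction is routine, and the proposal as written stops at the point where the actual work begins.

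The paper sidesteps this difficulty entirely by inducting on the number of columns rather than on Schoenberg atoms. Observe that
\[
(1;\hat f_1,\hat f_2)=\begin{bmatrix}1&0\\0&(\hat f_1/t;\hat f_2,\hat f_1)\end{bmatrix},
\qquad
(\hat f_1/t;\hat f_2,\hat f_1)=(\hat f_1/t;t,t)\begin{bmatrix}1&0\\0&(\hat f_2/t;\hat f_1,\hat f_2)\end{bmatrix},
\]
and symmetrically with the roles of $\hat f_1,\hat f_2$ swapped. Writing $H_1=(\hat f_1/t;\hat f_2,\hat f_1)$, $H_2=(\hat f_2/t;\hat f_1,\hat f_2)$ and $H_i[n]$ for the submatrix of the first $n$ columns, these identities give $H_1[n+1]=T(\hat f_1/t)\bigl([1]\oplus H_2[n]\bigr)$ and $H_2[n+1]=T(\hat f_2/t)\bigl([1]\oplus H_1[n]\bigr)$. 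Since $T(\hat f_i/t)$ is TP by the PF hypothesis, a joint induction on $n$ shows every $H_i[n]$ is TP, hence so are $H_1$, $H_2$, and $(1;\hat f_1,\hat f_2)$. The alternation that blocked your atom-by-atom insertion is precisely what makes this column-by-column recursion work: each step peels off one Toeplitz factor and swaps the two multipliers, so the PF hypothesis on $\hat f_1$ and $\hat f_2$ is used alternately rather than simultaneously.
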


\begin{proof} 
It is well known that a double Riordan array has the decomposition

\be\label{6.1}
(\hat g;\hat f_1,\hat f_2)) = (\hat g;t,t,)((1,\hat f_1,\hat f_2).
\ee
By the classic Cauchy-Binet formula, the product of two TP matrices is also TP. 
So, the total positivity of both $(\hat g;t,t)$ and $(1;\hat f_1,\hat f_2)$ implies total positivity of $(\hat g;\hat f_1,\hat f_2)$. 
If $(\hat g_n=[t^n]g(t))_{n\geq 0}$ is PF, then the corresponding Toeplitz array $(\hat g(t);t,t)$ is TP. Hence, it remains to prove that if the sequence $(\hat f_{i,n}=[t^n] \hat f_i(t))_{n\geq 1}$, $i=1,2$, is PF, then the corresponding Lagrange  array $(1;\hat f_1,\hat f_2)$ is TP. We write 

\be\label{6.2}
(1;\hat f_1,\hat f_2)=\left[ \begin{array}{cc} 1& 0\\ 0&(\hat f_1/t;\hat f_2,\hat f_1)\end{array}\right].
\ee
Hence the total positivity of $(1;\hat f_1,\hat f_2)$ is equivalent to the total positivity of the Bell type double Riordan array $(\hat f_1/t;\hat f_2,\hat f_1)$. Since $(\hat f_1/t;\hat f_2,\hat f_1)=(\hat f_1/t;t,t)(1;\hat f_2,\hat f_1)$, then from \eqref{6.2}

\be\label{6.3}
(\hat f_1/t;\hat f_2,\hat f_1)=(\hat f_1/t;t,t)(1;\hat f_2,\hat f_1)=(\hat f_1/t;t,t)\left[ \begin{array}{cc} 1& 0\\ 0&(\hat f_2/t;\hat f_1,\hat f_2)\end{array}\right].
\ee
Similarly, 

\be\label{6.4}
(\hat f_2/t;\hat f_1,\hat f_2)=(\hat f_2/t;t,t)(1;\hat f_1,\hat f_2)=(\hat f_2/t;t,t)\left[ \begin{array}{cc} 1& 0\\ 0&(\hat f_1/t;\hat f_2,\hat f_1)\end{array}\right].
\ee
So it suffices to prove that if the sequence $(\hat f_{i,n})_{n\geq 1}$, $i=1,2$ are PF, then the corresponding Bell-type array $(\hat f_1/t;\hat f_2,\hat f_1)$ and $(\hat f_2/t;\hat f_1,\hat f_2)$ are TP. 

Denote 

\be\label{6.5}
T_1=(\hat f_1;t,t),\,\, T_2=(\hat f_2;t,t),\,\, H_1=(\hat f_1/t;\hat f_2,\hat f_1),\,\, H_2=(\hat f_2/t;\hat f_1,\hat f_2).
\ee

Let $H_i[n]$, $i=1,2$, be the submatrix consisting of the first $n$ columns of $H_i$, $i=1,2$. Clearly, $H$ is TP if and only if its submatrices $H_i[n]$ are all TP. Thus it suffices to show that $H_i[n]$, $i=1,2$, are TP for all $n\geq 1$. We proceed by induction on $n$. We have by \eqref{6.3} and \eqref{6.4}

\begin{align}
&H_1[n+1]=T_1\left[ \begin{array}{cc} 1&0 \\ 0&H_2[n]\end{array}\right]\quad \mbox{and}\nonumber\\
&H_2[n+1]=T_2\left[ \begin{array}{cc} 1& 0\\ 0&H_1[n]\end{array}\right].\label{6.6}
\end{align}
Clearly, $T_i$, $i=1,2$, are TP since they are the Toeplitz matrices of the PF sequence $(\hat f_{i,n})_{n\geq 1}$. Now, if $H_i[n]$, $i=1,2$, are TP, then so are

\[
\left[ \begin{array}{cc} 1& 0\\ 0&H_i[n]\end{array}\right]
\]
for $i=1,2$. Thus the product $H_i[n + 1]$, $i=1,2$, are also TP by \eqref{6.6}, as desired. This completes the proof. 
\end{proof}

Let us consider two special cases for $\hat b$ in a Riordan array and discuss its TP and the TP for the corresponding almost-Riordan array. 

Based on the discussion in the Introduction, we have

\begin{proposition}\label{pro:2.0}
If the compressed double almost-Riordan array $(\hat b|\,\hat g;\hat f_1,\hat f_2)$ is $TP$, then the corresponding compressed double Riordan array $(\hat g;\hat f_1,\hat f_2)$ is $TP$. 
\end{proposition}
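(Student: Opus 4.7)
The plan is to exploit the block decomposition (\ref{1.14-2}) of the compressed double almost-Riordan array, which exhibits the compressed double Riordan array as the bottom-right block, and then invoke the hereditary nature of total positivity under submatrix extraction.

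First I would observe that from (\ref{1.14-2}),
\[
(\hat b|\,\hat g;\hat f_1,\hat f_2)=\left( \begin{matrix} \hat b(0) & 0\\ (\hat b-\hat b(0))/t & (\hat g;\hat f_1,\hat f_2) \end{matrix}\right),
\]
so that deleting the first row and the first column of the matrix on the left yields precisely the compressed double Riordan array $(\hat g;\hat f_1,\hat f_2)$. Equivalently, if we denote $(\hat b|\hat g;\hat f_1,\hat f_2)=(\hat e_{n,k})_{n,k\geq 0}$ and $(\hat g;\hat f_1,\hat f_2)=(\hat m_{n,k})_{n,k\geq 0}$, then $\hat m_{n,k}=\hat e_{n+1,k+1}$ for all $n,k\geq 0$, which can be verified directly from Theorem \ref{thm:9.2}: column $k\geq 1$ of $(\hat b|\hat g;\hat f_1,\hat f_2)$ has generating function that is $t$ times the generating function of column $k-1$ of $(\hat g;\hat f_1,\hat f_2)$.

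Next I would apply the standard fact that any submatrix of a totally positive matrix is itself totally positive. Indeed, any minor of $(\hat m_{n,k})$, formed from rows $n_1<\cdots<n_r$ and columns $k_1<\cdots<k_r$, coincides with the minor of $(\hat e_{n,k})$ formed from the row indices $n_1+1<\cdots<n_r+1$ and column indices $k_1+1<\cdots<k_r+1$. Since every minor of the latter is nonnegative by hypothesis, every minor of $(\hat g;\hat f_1,\hat f_2)$ is nonnegative as well, establishing that $(\hat g;\hat f_1,\hat f_2)$ is TP.

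There is no substantive obstacle here; the proposition is essentially a structural corollary of the block form (\ref{1.14-2}) together with the monotonicity of the TP property under passage to submatrices. Its role in the paper is to justify the reduction strategy mentioned in the subsection heading, namely that the study of TP for compressed double almost-Riordan arrays can be based on the TP of compressed double Riordan arrays treated in Theorem \ref{thm:6.1}.
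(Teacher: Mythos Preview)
Your proposal is correct and matches the paper's approach: the paper does not write out a formal proof but simply states the proposition as following ``based on the discussion in the Introduction,'' which refers precisely to the block decomposition \eqref{1.14-2} exhibiting $(\hat g;\hat f_1,\hat f_2)$ as a submatrix together with the hereditary property of total positivity. Your write-up just makes this implicit reasoning explicit.
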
 

If $\hat b(t)=1$, then 

\[
(\hat b|\,\hat g;\hat f_1,\hat f_2)=\left( \begin{matrix} 1 & 0\\ 0& (\hat g;\hat f_1,\hat f_2)\end{matrix}\right). 
\]

Hence, one can observe

\begin{proposition}\label{pro:2.2}
The compressed double almost-Riordan array $(1|\,\hat g;\hat f_1,\hat f_2)$ is $TP$ if and only if the compressed double Riordan array $(\hat g;\hat f_1,\hat f_2)$ is $TP$. 
\end{proposition}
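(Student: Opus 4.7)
The plan is to exploit the block-diagonal decomposition of $(1|\hat g;\hat f_1,\hat f_2)$ afforded by the special value $\hat b=1$. Specializing \eqref{1.14-2} gives $\hat b(0)=1$ and $(\hat b-\hat b(0))/t=0$, so
\[
(1|\,\hat g;\hat f_1,\hat f_2)=\left(\begin{matrix}1 & 0\\ 0 & (\hat g;\hat f_1,\hat f_2)\end{matrix}\right)=[1]\oplus(\hat g;\hat f_1,\hat f_2),
\]
a genuine block-diagonal matrix in the sense of \eqref{2.5-0}. Once this reduction is made, the proposition is a statement about total positivity of a direct sum. The forward implication is already covered: it is exactly the special case of Proposition \ref{pro:2.0} applied here, or can be seen directly by noting that any minor of $(\hat g;\hat f_1,\hat f_2)$ coincides with a minor of $[1]\oplus(\hat g;\hat f_1,\hat f_2)$ obtained by selecting only row and column indices from the bottom-right block.

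For the reverse implication, I will verify by a short case analysis that TP of $M:=(\hat g;\hat f_1,\hat f_2)$ forces TP of $[1]\oplus M$. Let $R,C$ be index sets of equal cardinality and split them according to the block structure as $R=R_0\cup R_1$ and $C=C_0\cup C_1$, where indices in $R_0\cup C_0$ refer to the leading $1\times 1$ block and indices in $R_1\cup C_1$ refer to the $M$-block. If $|R_0|=|C_0|$, the selected submatrix is itself block-diagonal and its determinant factors as a product of determinants of the two pieces, each of which is nonnegative (the leading piece is either empty, or the scalar $1$). If $|R_0|\ne|C_0|$, the selected submatrix has a rectangular zero off-diagonal block forcing a set of rows (or columns) to lie in a strictly smaller-dimensional subspace, so the determinant vanishes. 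In either case the minor is nonnegative, so $[1]\oplus M$ is TP.

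Putting the two directions together completes the equivalence. The routine observation to make explicit is the direct-sum/minor calculation in the reverse direction; I do not anticipate any real obstacle, since the block-diagonal structure trivializes the minor analysis. No additional hypothesis on $\hat g,\hat f_1,\hat f_2$ is required beyond those built into the definition, and the result stands as a clean corollary of \eqref{1.14-2} together with Proposition \ref{pro:2.0}.
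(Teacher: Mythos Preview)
Your proposal is correct and follows exactly the same approach as the paper: the paper simply records the block-diagonal form $(1|\,\hat g;\hat f_1,\hat f_2)=[1]\oplus(\hat g;\hat f_1,\hat f_2)$ and then states the proposition as an immediate observation, without spelling out the minor analysis. Your write-up is actually more detailed than what the paper provides, but the underlying idea is identical.
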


The following example shows there exists a non-TP compressed almost-Riordan array $(\hat b|\, \hat g;\hat f_1,\hat f_2)$, in which $\hat b$, $\hat g$, $\hat f_1$, and $\hat f_2$ are P\'olya frequency.

\begin{example}
\label{ex:gf_not_TP}
Let $\hat b(t)=(1+t)^2$, $\hat g(t)=1/(1-t)$, and $\hat f_1(t)=\hat f_2(t)=t$. Then $\hat b(t)$, $\hat g(t)$ and $\hat f_i(t)$ ($i=1,2$) are P\'olya frequency, and $(\hat g;\hat f_1,\hat f_2)$ is TP. However, the minor of $(\hat b|\,\hat g;\hat f_1,\hat f_2))=((1+t)^2|1/(1-t); t, t)$

\[
M^{1,2,3}_{0,1,2}=\det\left( \begin{array}{lll} 2&1&0\\1& 1&1\\0&1&1\end{array}\right)=-1<0.
\]
Hence, $(\hat b|\,\hat g;\hat f_1,\hat f_2)$ is not TP although $(\hat g;\hat f_1,\hat f_2)$ is TP and $b$ is P\'olya frequency. 
\end{example}

\subsection{Total positivity of compressions of double almost-Riordan arrays}

As we have seen, the most interesting question here is whether having fixed $(\hat g;\hat f_1,\hat f_2)$ can we choose $\hat b$ in such way to ensure total positivity of $(\hat b|\hat g;\hat f_1,\hat f_2)$ and what, in such case, is the connection between $\hat b$ and the triple $(\hat g,\hat f_1,\hat f_2)$.  

For every double Riordan array $(\hat g;\hat f_1,\hat f_2)$ there exists $\hat b$ such that the double almost-Riordan array $(\hat b|\, \hat g;\hat f_1,\hat f_2)$ is TP. In fact, we have 

\begin{theorem}\label{thm:2.4-2}
Suppose even $\hat g\in\F_0$, odd $\hat f_1,\hat f_2\in\F_1$ such that the compressed double Riordan array $(\hat g;\hat f_1,\hat f_2)$ is TP. Then the compressed double almost-Riordan array $\left(t\cdot g+\alpha|\,\hat g;\hat f_1,\hat f_2\right)$ is also TP for every $\alpha>0$.
\end{theorem}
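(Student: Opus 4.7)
The plan is to verify total positivity directly via a case analysis on the minors of $M := (t\hat g+\alpha|\,\hat g;\hat f_1,\hat f_2)$, exploiting a special structural feature of this particular choice of $\hat b$. Using the block form \eqref{1.14-2}, I first observe that when $\hat b(t) = t\hat g(t)+\alpha$ one has $\hat b(0)=\alpha$ and $(\hat b-\hat b(0))/t=\hat g$. Since column $0$ of $D:=(\hat g;\hat f_1,\hat f_2)$ also has generating function $\hat g$, this means that columns $0$ and $1$ of $M$ are literally identical in rows $1,2,3,\dots$, and differ only in row $0$, where column $0$ has entry $\alpha$ and column $1$ has entry $0$. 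The remainder of row $0$ of $M$ is zero, and every other entry equals the corresponding entry of $D$ (with both indices shifted by $1$).

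Next, I fix an arbitrary square submatrix of $M$ obtained by choosing rows $r_0<r_1<\cdots<r_p$ and columns $c_0<c_1<\cdots<c_p$, and show its determinant is nonnegative by case analysis. If $r_0\ge 1$ and $c_0\ge 1$, the submatrix coincides entry-for-entry with the submatrix of $D$ indexed by rows $r_i-1$ and columns $c_i-1$, hence has nonnegative determinant by the assumed total positivity of $D$. If $r_0\ge 1$ and $(c_0,c_1)=(0,1)$, then the first two columns of the submatrix are identical, so its determinant is $0$. If $r_0\ge 1$, $c_0=0$, and $c_1\ge 2$, then replacing the column of $M$ labelled $0$ by its equal-below-row-$0$ twin gives a minor of $D$ at rows $r_i-1$ and columns $0,c_1-1,\dots,c_p-1$, again nonnegative.

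It remains to treat $r_0=0$. If $c_0\ge 1$, the top row of the submatrix is $(0,0,\dots,0)$, so the determinant is $0$. If $c_0=0$, the top row equals $(\alpha,0,0,\dots,0)$, and Laplace expansion along row $0$ gives
\[
\det M[0,r_1,\dots,r_p;\,0,c_1,\dots,c_p]
=\alpha\cdot\det M[r_1,\dots,r_p;\,c_1,\dots,c_p],
\]
and the remaining minor now falls under one of the $r_0\ge 1$ cases already shown to be nonnegative; multiplying by $\alpha>0$ preserves nonnegativity. All minors of $M$ are therefore nonnegative, which is exactly the definition of total positivity.

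I do not expect any serious obstacle here: the argument is essentially structural bookkeeping. The one subtle point worth stating carefully is the reduction in the case $r_0\ge 1$, $c_0=0$, $c_1\ge 2$, where one must observe that column $0$ of $M$ (below row $0$) is not a ``shifted'' version of anything but literally column $0$ of $D$, so that the resulting minor is a genuine minor of $D$ rather than of some shifted matrix. Once this identification is made, the total positivity of $D$ supplied by the hypothesis closes every case.
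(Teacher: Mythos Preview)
Your proof is correct and follows essentially the same approach as the paper: both use the block form \eqref{1.14-2} to observe that column~$0$ of $M$ below row~$0$ coincides with column~$0$ of $(\hat g;\hat f_1,\hat f_2)$, and then run a case analysis on whether row~$0$ and column~$0$ appear in a given minor, reducing each case either to a minor of $(\hat g;\hat f_1,\hat f_2)$, to a determinant with two equal columns, or (via Laplace expansion along row~$0$) to $\alpha$ times a previously handled minor. Your write-up is in fact slightly more careful than the paper's in separating out the subcase $r_0=0,\ c_0\ge 1$ and in spelling out the index shifts, but the underlying argument is identical.
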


\begin{proof}
Observe that the compressed double almost-Riordan array $\left(t\cdot \hat g+\alpha|\,\hat g;\hat f_1,\hat f_2\right)=R$ can be written in the partitioned form
\[
R=\left[
\begin{array}{c|c}
\alpha & 0 \\
\hline 
\hat g & (\hat g;\hat f_1,\hat f_2) \\
\end{array}
\right].
\]
We now consider its minors.
\begin{enumerate}
\item 
If a minor of $R$ does not contain the column $0$, then it is simply a minor of $(\hat g;\hat f_1,\hat f_2)$, so it is nonnegative.
\item 
Suppose now that a minor of $R$ contains the column $0$.
\begin{enumerate}
\item 
If it contains the row $0$, then expanding along it, we get a minor of $(\hat g;\hat f_1,\hat f_2)$ multiplied by $\alpha$ which is a nonnegative number.
\item 
If this minor does not contain the row $0$, then it is a minor of the partitioned matrix 
\[
\tilde R=\left[\begin{array}{c|c|c}
\hat g_0 & g_0 & 0 \\
\hline
\frac{\hat g-\hat g_0}t & \frac{\hat g-\hat g_0}t & \left(\frac{\hat g\hat f_1}t;\hat f_1,\hat f_2\right)\\
\end{array}\right].
\]
\begin{enumerate}
\item 
Now, if minor contains both the two first columns of $\tilde R$, then it is equal to $0$.  
\item 
If the minor contains one or none of the first two columns of $\tilde R$, then it is simply one of the minors of $(\hat g,\hat f)$, so it is nonnegative. 
\end{enumerate}
\end{enumerate}
\end{enumerate}
\end{proof}

\par 
Moreover, for every TP compressed double Riordan array $(\hat g;\hat f_1,\hat f_2)$ there exist $\hat b$ that does not depend on $\hat g$, $\hat f_1$, and $\hat f_2$ such that $(\hat b|\, \hat g;\hat f_1,\hat f_2)$ is TP.

\begin{theorem}\label{thm:2.4-3}
Let even $\hat g\in\F_0$, odd $\hat f_1,\hat f_2\in\F_1$ and let $\hat b=\hat b_0+\hat b_1t$ with $\hat b_0,\hat b_1\geqslant 0$. If the compression double Riordan array $(\hat g;\hat f_1,\hat f_2)$ is TP, then $(\hat b_0+\hat b_1t|\,\hat g;\hat f_1,\hat f_2)$ is also TP.
\end{theorem}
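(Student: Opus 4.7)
The plan is to extend the case-analysis strategy used in the proof of Theorem \ref{thm:2.4-2}, exploiting the fact that when $\hat b=\hat b_0+\hat b_1 t$, column $0$ of $R=(\hat b_0+\hat b_1 t|\,\hat g;\hat f_1,\hat f_2)$ has the especially simple form $(\hat b_0,\hat b_1,0,0,\ldots)^T$, while the submatrix obtained by deleting row $0$ and column $0$ coincides with $(\hat g;\hat f_1,\hat f_2)$, which is TP by hypothesis. Thus in partitioned form,
\[
R=\left[\begin{array}{c|c}
\hat b_0 & 0 \\
\hline
\hat b_1 e_0 & (\hat g;\hat f_1,\hat f_2) \\
\end{array}\right],
\]
with $e_0=(1,0,0,\ldots)^T$. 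Note also that row $1$ of $R$ reads $(\hat b_1,\hat g_0,0,0,\ldots)$, since the columns $k\ge 2$ of $R$ have generating functions $t\cdot(\text{col } k{-}1 \text{ of } (\hat g;\hat f_1,\hat f_2))$, which have order at least $2$.

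Next I would fix any minor $\det R[\mathsf R,\mathsf C]$ with $|\mathsf R|=|\mathsf C|=r$ and distinguish cases. \emph{Case A:} $0\notin \mathsf C$. If $0\in\mathsf R$, row $0$ of the selected submatrix vanishes, so the minor is $0$; if $0\notin\mathsf R$, an index shift identifies the minor with a minor of $(\hat g;\hat f_1,\hat f_2)$, hence it is nonnegative. \emph{Case B:} $0\in\mathsf C$ and $0\in\mathsf R$. Expanding along row $0$ gives $\hat b_0$ times a minor falling under Case A, which is therefore $\geq 0$. \emph{Case C:} $0\in\mathsf C$ and $\{0,1\}\cap\mathsf R=\emptyset$. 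Then column $0$ of the selected submatrix is identically zero, so the minor vanishes.

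The only delicate case is \emph{Case D:} $0\in\mathsf C$, $0\notin\mathsf R$, $1\in\mathsf R$. Here I would split on whether $1\in\mathsf C$. If $1\notin\mathsf C$, then row $1$ of the selected submatrix has $\hat b_1$ at position $0$ and zeros in all other selected columns (by the order observation above), so Laplace expansion along row $1$ gives $\hat b_1$ times a Case-A minor, hence $\geq 0$. If $1\in\mathsf C$, expansion along row $1$ yields
\[
\hat b_1\, M_0 \;-\; \hat g_0\, M_1,
\]
where $M_0$ uses rows $\mathsf R\setminus\{1\}$ and columns $\mathsf C\setminus\{0\}$, and $M_1$ uses rows $\mathsf R\setminus\{1\}$ and columns $\mathsf C\setminus\{1\}$. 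The key observation, which dissolves the potentially-harmful minus sign, is that $M_1=0$: the column set $\mathsf C\setminus\{1\}$ still contains $0$, but $\mathsf R\setminus\{1\}\subseteq\{2,3,\ldots\}$, so the column $0$ of its submatrix is entirely zero. Meanwhile $M_0$ is a Case-A minor and therefore nonnegative, giving $\hat b_1 M_0\geq 0$.

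The main obstacle is exactly this last step, namely recognizing that the ``bad'' term $\hat g_0 M_1$ always drops out thanks to the fact that column $0$ of $R$ is supported only on rows $0$ and $1$. Once this is verified, the entire argument reduces to bookkeeping and the only hypotheses actually used are $\hat b_0,\hat b_1\geq 0$ together with the TP of $(\hat g;\hat f_1,\hat f_2)$, yielding the theorem.
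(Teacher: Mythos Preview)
Your proof is correct and follows essentially the same case-analysis strategy as the paper's own argument. The only cosmetic difference is in your Case~D: the paper expands the minor along column~$0$ (which has the single nonzero entry $\hat b_1$ in the selected rows, giving a one-term expansion directly), whereas you expand along row~$1$ and then observe that the second term $\hat g_0\,M_1$ vanishes---both routes land on $\hat b_1$ times a minor of $(\hat g;\hat f_1,\hat f_2)$.
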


\begin{proof}
Let 
\[
R=(\hat b_0+\hat b_1t|\,\hat g;\hat f_1,\hat f_2)=\left[
\begin{array}{c|c}
\hat b_0 & 0 \\
\hline 
\hat b_1 &  \\
0 & (\hat g;\hat f_1,\hat f_2) \\
\vdots &  \\
\end{array}
\right].
\]
Consider the minors of $R$.
\begin{enumerate}
\item If a minor does not contain the column $0$, then it is a minor of $(\hat g;\hat f_1,\hat f_2)$ that, by the assumption, is nonnegative. 
\item Suppose now that a minor contains the column $0$. 
\begin{enumerate}
\item 
If it contains the row $0$, then, after expansion along this row, one gets a minor of $(\hat g;\hat f_1,\hat f_2)$ multiplied by $b_0$ which is a nonnegative number.
\item 
If it does not contain the row $0$, but it contains the first row of $R$, then it is a minor of 
\[
\tilde R=\left[\begin{array}{c|c|c}
\hat b_1 & \hat g_0 & 0 \\
\hline
0 & \frac{\hat g-\hat g_0}t & \left(\frac{\hat g\hat f_1}t;\hat f_1,\hat f_2\right)\\
\end{array}\right].
\]
Thus, expanding now along the column $0$, we get a minor of $(\hat g;\hat f_1,\hat f_2)$ multiplied by $b_1$ that is nonnegative.
\item 
If it does not contain neither the zeroth nor the first row of $R$ then it is a minor of 
\[
\tilde R=\left[\begin{array}{c|c|c}
0 & \frac{\hat g-\hat g_0}t & \left(\frac{\hat g\hat f_1}t;\hat f_1,\hat f_2\right)\\
\end{array}\right]
\]
and it is equal to $0$.
\end{enumerate}
\end{enumerate}
\end{proof}

\medbreak


\begin{thebibliography}{99}
\bibitem{AESW}
M. Aissen, A. Edrei, I.J. Schoenberg and A.M. Whitney, On the generating functions of totally positive sequences, {\it Proc. Nat. Acad. Sci. U.S.A.} 37 (1951), 303--307.

\bibitem{ASW}  
M. Aissen, I.J. Schoenberg and A.M. Whitney, On the generating functions of totally positive sequences I, {\it J. Analyse Math.} 2 (1952), 93--103.

\bibitem{AK}
Y. Alp and E. G. Kocer, Sequence characterization of almost-Riordan arrays, {\it Linear Algebra Appl.} 664 (2023), 1--23.

\bibitem{AK24}
Y. Alp and E. G. Kocer, Exponential almost-Riordan arrays, {\it Results Math.} (2024) 79:173, Online First, https://doi.org/10.1007/s00025-024-02193-5.

\bibitem{BDLPP}
E. Barcucci, A. Del Lungo, E. Pergola, and R. Pinzani, ECO: a methodology for the enumeration of combinatorial objects, {\em J. Difference Equations Appl.} 5 (1999), 435-490.

\bibitem{Barry16}
P. Barry, On the Group of Almost-Riordan Arrays (2016), arXiv:1606.05077.

\bibitem{BHP}
P. Barry, T.-X. He, and N. Pantelidis, The quasi-Riordan group and the almost-Riordan group, manuscript in preparation.  

\bibitem{BDFJT}
D. Branch, D. Davenport, S. Frankson, J. Jones, and G. Thorpe, A and Z Sequences for Double Riordan Arrays, Springer Proceedings in Mathematics and Statistics 388 (2022), 33--46. 

\bibitem{Bre}
F. Brenti, Combinatorics and total positivity, {\it J. Combin. Theory Ser. A}, 71 (1995) 175--218.

\bibitem{CLW} 
X. Chen, H. Liang and Y. Wang, Total positivity of Riordan arrays, {\it European J. Combin.} 46 (2015) 68--74.

\bibitem{CW}
X. Chen and Y. Wang, Notes on the total positivity of Riordan arrays, {\it Linear Algebra Appl.} 569 (2019) 156--161.

\bibitem{CGHK}
F. R. K. Chung, R. L. Graham, V. E. Hoggatt, and M. Kleiman, The number of Baxter permutations, {\em J. Combin. Theory Ser. A},  24 (1978), 382-394.

\bibitem{comtet} 
L. Comtet, {\em Advanced Combinatorics}, French, 1974.

\bibitem{DFSW} D.E. Davenport, S.K. Frankson, L.W. Shapiro, L.C. Woodson, An Invitation 
to the Riordan Group, {\it Enumerative Combinatorics and Applications}, ECA 4:3 (2024), Article \# S2S1.

\bibitem{DFR05}
E. Deutsch, L. Ferrari, and S. Rinaldi, Production matrices, {\it Adv. Appl. Math.}, 34 (2005), No. 1, 101-122.

\bibitem{DFR}
E. Deutsch, L. Ferrari, and S. Rinaldi, Production matrices and Riordan arrays, {\it Ann. Combin.}, 13 (2009), 65-85.

\bibitem{DSW} 
D. E. Davenport, L. W. Shapiro, and L. C., Woodson,  The double Riordan group, {\it Electronic J. Combin.} 18(2) (2012), P33.

\bibitem{FPPR}
L. Ferrari, E. Pergola, R. Pinzani, and S. Rinaldi, An algebraic characterization of the set of succession rules, Selected papers in honour of Maurice Nivat, {\it Theoret. Comput. Sci.} 281 (2002), no. 1-2, 351--367. 

\bibitem{He15} 
T.-X. He, Matrix characterizations of Riordan arrays, {\it Linear Algebra Appl.} 465 (2015), 15-42.

\bibitem{He18} 
T.-X. He, Sequence characterizations of double Riordan arrays and their compressions, {\it Linear Algebra Appl.} 549 (2018), 176--202.

\bibitem{He}
T.-X. He, The vertical recursive relation of Riordan arrays and its matrix representation, {\it J. Integer Seq.} 25 (2022), no. 9, Art. 22.9.5, 22 pp.

\bibitem{He24}
T.-X. He, The double almost-Riordan group, submission, 2024.

\bibitem{He25}
T.-X. He, The double almost-Riordan group, {\it Linear Algebra Appl.} 705 (2025), 50--88.

\bibitem{HS23}
T.-X. He and R. Slowik, Total positivity of quasi-Riordan arrays and Riordan arrays, submitted, arXiv:2406.07120. 

\bibitem{HS24}
T.-X. He and R. Slowik, Total positivity of almost-Riordan arrays and Riordan arrays, submitted, arXiv:2406.03774. 

\bibitem{HS24-2}
T.-X. He and L. Shapiro, Sequence characterization of improper Riordan arrays and its application in bogus-involutions and pseudo-involution, manuscript, 2024.

\bibitem{HS}
T.-X. He and R. Sprugnoli. Sequence Characterization of Riordan Arrays, {\em Discrete Math.}, 309 (2009), 3962-3974.

\bibitem{Hor}
Y. Horibe, Notes on Fibonacci trees and their optimality, {\it Fibonacci Quart.} 21 (1983), no. 2, 118--128.

\bibitem{Kar}
S. Karlin, {\it Total Positivity, Vol.1}, Stanford University Press, 1968.

\bibitem{KPP}
A. Kuznetkov, I. Pak, and A. Postnikov, Trees associated with the Motzkin numbers, {\it J. Combin. Theory Series A}, 76 (1996), 145--147.

\bibitem{LMMS2}
A. Luz\'on, D. Merlini, M. A. Mor\'on, and R. Sprugnoli, Complementary Riordan arrays. {\it Discrete Appl. Math.} 172 (2014), 75--87.

\bibitem{MMW}
J. Mao, L. Mu, and Y. Wang, Yet another criterion for the total positivity of Riordan arrays, {\it Linear Algebra Appl.} 634 (2022), 106--111.

\bibitem{MRSV}
D. Merlini, D. G. Rogers, R. Sprugnoli, and M. C. Verri, On some alternative characterizations of Riordan arrays, 
{\em Canadian J. Math.}, 49 (1997), 301--320.

\bibitem{Pin}
A. Pinkus, {\it Totally Positive Matrices}, Cambridge University Press, Cambridge, 2010.

\bibitem{Rog}
D. G. Rogers, Pascal triangles, Catalan numbers and renewal arrays, {\em Discrete Math.}, 22 (1978), 301--310.

\bibitem{Sha1}
L. W. Shapiro, Some open questions about random walks, involutions, limiting distributions and
generating functions, {\em Advances in Applied Math.}, 27 (2001), 585--596.

\bibitem{SGWW}
L. V. Shapiro, S. Getu, W. J. Woan and L. Woodson,  The Riordan group, {\em Discrete Appl. Math.} 34(1991) 229--239.

\bibitem{Sta}
R. P. Stanley, {\it Catalan Numbers}, Cambridge University Press, New York, 2015. 

\bibitem{SS} 
C. Sun and Y. Sun, On the halves of double and 3-dimensional Riordan arrays, {\it Linear Algebra and Appl.} 679 (2023), 194--219. 

\bibitem{West}
J. West, Generating trees and forbidden subsequences, Proceedings of the 6th Conference on Formal Power Series and Algebraic Combinatorics (New Brunswick, NJ, 1994). {\em Discrete Math.} 157 (1996), no. 1-3, 363--374. 

\bibitem{ZZ}
L. Zhang and X. Zhao, $q$-double Riordan matrices, {\it Linear Algebra Appl.} 603 (2020), 212--225. 

\end{thebibliography}
\end{document}